\theoremstyle{plain}
\newtheorem{theorem}{Theorem}[section]
\newtheorem{definition}[theorem]{Definition}
\newtheorem{lemma}[theorem]{Lemma}
\numberwithin{equation}{section} 
\numberwithin{figure}{section} 
\newtheorem{prop}[theorem]{Proposition}
\theoremstyle{remark}
\newtheorem{remark}[theorem]{Remark}
\newtheorem{example}[theorem]{Example}
\newtheorem*{acknowledgement*}{Acknowledgement}
\def\R{{\mathbb R}} 
\def\N{{\mathcal N}} 
\def\W{{\mathcal W}}
\def\M{{\mathcal M}} 
\def\C{{\mathcal C}} 
\def\H{{\mathcal H}} 
\def\L{{\mathcal L}} 
\def\D{{\mathcal D}}
\def\T{{\mathcal T}}
\def\clomega{\Omega}
\def\res{\mathop{\hbox{\vrule height 7pt width.5pt depth 0pt\vrule height .5pt width 6pt depth 0pt}}\nolimits}
\def\div{\mathop {\rm div}\nolimits}
\def\spt{\mathop {\rm spt}\nolimits}
\def\eps{\varepsilon}
\def\tang{{\textgoth T}}
\def\norm{{\textgoth N}}
\def\lip{Lip(\Omega)}
\def\lipp{{\bf X}_0(\Omega)}
\def\lips{{\bf X}_0^\sharp (\Omega)}
\def\xsp{{\bf X}(\Omega)}
\def\xn{{\bf X}_0(\Omega)}
\def\x0s{{\bf X}_0^\sharp (\Omega)}
\def\ds{\displaystyle}
\title[The Monge-Kantorovich problem for distributions]{The
  Monge-Kantorovich problem for distributions and applications}
\author{Guy Bouchitt\'e, Giuseppe Buttazzo, Luigi De Pascale}
\address{Laboratoire d' Analyse Non Lin\'eaire Appliqu\'ee, U.F.R. des Sciences et Techniques, Universit\'e du Sud Toulon-Var, Avenue de l'Universit\'e, BP 20132, 83957 La Garde Cedex, France}
\email{bouchitte@univ-tln.fr}
\address{Dipartimento di Matematica, Universit\`a di Pisa, Largo Pontecorvo 5, 56127 Pisa, Italy}
\email{buttazzo@dm.unipi.it}
\address{Dipartimento di Matematica Applicata, Universit\`a di Pisa, Via Buonarroti 1/C, 56127 Pisa, Italy}
\email{depascal@dm.unipi.it}
\subjclass{}
\begin{document}

\begin{abstract}
We study the Kantorovich-Rubinstein transhipment problem when the
difference between the source and the target is not anymore a balanced measure but belongs to a suitable subspace $\xsp$ of first order distribution. A particular subclass $\x0s$ of such distributions
will be considered which includes the infinite sums of dipoles
$\sum_k(\delta_{p_k}-\delta_{n_k})$ studied in \cite{P1, P2}.
In spite of this weakened regularity, it is shown that an optimal transport density still exists among  nonnegative finite measures. Some geometric properties of the Banach spaces 
$\xsp$ and $\x0s$ can be then deduced.
\end{abstract}
\maketitle

\begin{flushleft}
{\bf Keywords.\,} Monge-Kantorovich problem, optimal transportation, transhipment problem, flat norm, minimal connections, Jacobians.

\bigskip

{\bf MSC 2000.\,} 49J45, 49J20, 82C70, 90B06
\end{flushleft}

\bigskip
\hfill{\it Dedicated to Hedy Attouch}\par
\hfill{\it on the occasion of his 60th birthday.}
\bigskip

\section{Introduction}
In the recent years, motivated by many applications, a lot of
attention has been devoted by the mathematical community to mass
transportation problems. They can be expressed in different equivalent
formulations, that we will shortly recall below. The usual
setting for these problems requires to consider source and target in
the space of probability measures on a domain of $\R^N$, on a
manifold, or more generally on a metric space. On the other hand, for
various applications (see for instance \cite{Br, P1, P2}) it is
interesting to develop a theory of optimal transportation (and
Wasserstein distances) for more general objects. In this paper
$\Omega$ will be a convex compact subset of $\R^N$ and we will focus
our attention on the space of distributions of order one and zero ``average''
\begin{multline}\label{xn}
\xn := \{f \in {\mathcal D}' (\R^N) \ | \ \forall \varphi \in
{\mathcal D} (\R^N), \ \langle f, \varphi \rangle \leq C (\|\varphi
\|_{L^\infty (\Omega)} +\\\|\nabla \varphi \|_{L^\infty (\Omega)}), \
\langle f , 1 \rangle =0 \}. 
\end{multline}
Such distributions are compactly supported in $\Omega$
and the condition $\langle f , 1 \rangle =0$ above means that whenever
$\varphi \in {\mathcal D} (\R^N)$ is constant in $\Omega$ then   
$\langle f , \varphi \rangle =0$.  From (\ref{xn}),  
it is natural to endow $\xn$ with the dual of the Lipschitz norm 
on smooth functions and we may identify $\xn$
with a closed subspace of the dual of $\C^1(\Omega)$.
Let us notice that , although  it is tempting, we are not allowed to  identify  
$ \xn$ with a subspace of the dual of $Lip (\Omega)$ since the extension of an 
element $f\in \xn$ to all Lipschitz functions given by Hahn-Banach Theorem 
{\em  is non unique ! }.

As far as  the usual setting for the Monge-Kantorovich problem is considered,
one needs to work only with the subspace  of measures of $\xn$ given by
 $$ \M _0(\Omega):=\left\{f \in \M(\Omega) \ | \ \int_\Omega f =0\right\}.$$
It is shown in \cite{B-C-J} that the closure $\x0s$ of $\M_0(\Omega)$ can be characterized as 
\begin{multline*} 
\x0s:=\{f \in\xn \ | \ \forall \eps>0\ \exists \ C_\eps>0 \ s.t.\\
  |\langle f,\varphi \rangle|\le C_\eps\|\varphi\|_{L^\infty (\Omega)}+
\eps\|\nabla \varphi\|_{L^\infty (\Omega)} \ \forall  \varphi \in {\mathcal D}(\R^N) \}\ .
\end{multline*}
This strict subspace can  can be seen also as the completion 
of $\M _0(\Omega)$ with respect to the Monge-Kantorovitch norm.
In \cite{B-C-J} it is proved that elements of $\x0s$ can be represented 
as the distributional divergence of functions in $L^1(\Omega;\R^N)$ (or more in general 
of  a suitable class  of tangential vector measures).
 The role of this space will stand out for different reasons which will be
clear through the paper. In particular by suitably extending the
Monge-Kantorovich problem to all $\xn$, we will construct a 
 linear continuous projector from $\xn$ onto $\x0s$ (see theorem \ref{projector}).

\medskip
Before considering the details of the weakened formulation of the
Monge-Kanto-\\rovich mass transportation problem let us first recall the
main issues about the classical version of the problem and its various
formulations. Also in this short survey, for simplicity, we will limit ourselves to the case when
$\Omega$ is a  convex compact subset of $\R^N$.

\medskip

$\bullet$ The most classical formulation of a mass transportation
problem goes back to Monge (1781) and in modern terminology
(Kantorovich 1942) consists, given two probabilities $f^+$ and $f^-$
on $\Omega$, in finding a measure $\gamma$ on $\Omega\times\Omega$
which minimizes the total transportation cost
\begin{equation}\label{formulazione1}
  \int_{\Omega\times\Omega}|x-y|\,d\gamma(x,y)
\end{equation}
among all admissible transport plans $\gamma$ such that
$\pi^1_\sharp\gamma=f^+$ and $\pi^2_\sharp\gamma=f^-$. Here
$\pi^1_\sharp$ and $\pi^2_\sharp$ are the usual push-forward operators
associated to the projections $\pi^1$ and $\pi^2$ from
$\Omega\times\Omega$ on the first and second factors respectively.
Notice that the formulation above is meaningless if $f^+$ and $f^-$
are distributions that are not measures.

We say that the problem is of {\it transhipment} type (see Kantorovich
and Rubinstein \cite{CR}) when only the difference $f^+-f^-$ is
specified.

\begin{definition}\label{norm}
  The quantity
  \begin{equation}
    \W^1 (f^+,f^-):=\inf_{\pi^1_\sharp\gamma=f^+,\ \pi^2_\sharp\gamma=f^-}\int_{\Omega\times\Omega}|x-y|\,d\gamma(x,y)
  \end{equation}
  is called Wasserstein distance of $f^+$, $f^-$. Setting
  $f=f^+-f^-$ the quantity above may be redefined as:
  \begin{equation}
    \inf_{\pi^1_\sharp\gamma-\pi^2_\sharp\gamma=f}\int_{\Omega\times\Omega}|x-y|\,d\gamma(x,y)
  \end{equation}
  and we will denote it by $ \W^1(f)$. This last quantity is a norm in
  the space of measures $f$ such that $\int df=0$ and it is called the
  Kantorovich norm.
\end{definition}

$\bullet$ The dual formulation of the mass transportation problem
(\ref{formulazione1}) introduces the Kantorovich potential $u$ which
is a solution of the maximization problem
\begin{equation}\label{formulazione3}
  \max\Big\{\int u\,d(f^+-f^-)\ :\ u\in Lip_1(\Omega)\Big\}=\W^1 (f).
\end{equation}
The value of (\ref{formulazione3}) is called flat dual norm
$\|f^+-f^-\|$ in the duality $\langle\lip,Lip'(\Omega)\rangle$. In the
classical setting $u$ plays a key role in proving many of the results
of the Monge-Kantorovich theory.

\medskip

$\bullet$ The mass transportation problem above can be equivalently
expressed through the {\it Kantorovich potential} $u$ and the
transport density $\mu$ which solve the system (see \cite{B-B, EG,
  A1})
\begin{equation}\label{formulazione2}
  \left\lbrace\begin{array}{ll}
      -\div(\mu\,D_\mu u)=f^+-f^- & \ \ \mbox{in} \ \Omega \\
      |D_\mu u|=1\ \mbox{ on }\spt\mu,& \ \ u\mbox{ is 1-Lipschitz on }\Omega
    \end{array}\right.
\end{equation}
which consists of an elliptic PDE coupled with an eikonal
equation. For general measures $\mu$ the precise sense of the PDE
above has to be intended by means of a weak formulation involving the
theory of Sobolev spaces with respect to a measure (see
\cite{B-B-S,B-B}). The following representation formula for
$\mu$ holds: 
$$\mu=\int\H ^1\res S_{x,y}\, d\gamma(x,y)$$
where $\gamma$ is an optimal transport plan for the cost (\ref{formulazione1}), $\H^1$ is
the 1-dimensional Hausdorff measure and $S_{x,y}$ is the geodesic line
(the segment in our Euclidean case) connecting $x$ to $y$. The
transport density $\mu$ appears in various applications whose models admit a
Monge-Kantorovich type formulation (see for example \cite{B-B, B-B-S,
  C-C}). Moreover the transport density was also used to prove an
existence result for optimal transport maps (see \cite{EG}).

\medskip

\noindent Several results on the formulations above have been
obtained; in particular, it has been shown that the regularity of the
measure $\mu$ (called transport density) depends on the regularity of
the data $f^+$ and $f^-$. More precisely, we summarize here below what
is known on this dependence.

\medskip

$\bullet$ When $f^+$ and $f^-$ are merely nonnegative measures, the
transport density $\mu$ is a nonnegative measure too (see \cite{B-B}).
As already mentioned above, the Monge-Kantorovich PDE
(\ref{formulazione2}) has to be intended in the sense of Sobolev
spaces with respect to a measure.

\medskip

$\bullet$ Additional assumptions on the source terms $f^+$ and $f^-$
have to be made in order to provide more regularity to the transport
density $\mu$; more precisely, if $f^+$ and $f^-$ are in $L^p(\Omega)
$ with $1\le p\le+\infty$ then $\mu$ is in $L^p(\Omega)$ too (see
\cite{DP-P}, \cite{DP-E-P}). Moreover, in these cases the transport
density $\mu$ turns out to be unique (see \cite{A1}).

\medskip

$\bullet$ Some recent results on the particular case of mass
transportation problems that intervene in the identification of sand
pile shapes (see \cite{C-C}, \cite{Gtesi}) indicate that the H\"older
continuity of $\mu$ has to be expected, under additional regularity on
the data, while simple examples show that nothing more than Lipschitz
regularity can be obtained for $\mu$ even if very strong regularity
hypotheses on the data are made.

\medskip

$\bullet$ The continuity of $\mu$, when $f$ is continuous, as been obtained in
\cite{FraGelPra} under some additional geometric assumptions on
the supports of $f^+$ and $f^-$. However the continuity of $\mu$ in the general case
is an open problem.

\medskip

In the present paper we will also consider the opposite question
of the existence of a transport density $\mu$ when the source datum
$f=f^+-f^-$ is less regular than a measure. Indeed, we assume that $f$
only belongs to the space $\lipp$.

As it is well explained in \cite{brezis2,P1,P2} in some applications
$f$ describes the location and the topological degree of
singularities of a map $u$ with values in the sphere.

Indeed if $u$ belongs to $W^{1,N-1}(\Omega,\R^N)$ and is bounded
($u\in W^{1,N-1 }(\Omega,\mathcal S^{N-1})$ is a particular case) then
by the $*$-Hodge operator we can say that the $N-1$ form $$D(u):=
\sum_{i=1}^N\frac{(-1)^{i-1}}{N}u_i\widehat{du}^i, \ \ \
(\mbox{where}\
\widehat{du}^i:=du_i\wedge\dots\wedge\widehat{du_i}\wedge\dots)$$
corresponds to an $L^1(\Omega,\R^N)$ vector field, and then
$$-\div(*D(u))\in\D'(\Omega).$$
More precisely $-\div(*D(u))$ belongs
to $\lipp$.  For a smooth map $u$ $$-\div(*D(u))=*dD(u)=*J(u)$$
then
with a little abuse of words the Jacobian of a bounded map in
$W^{1,N-1}(\Omega,\R^N)$ belongs to $\lipp$ (actually to the smaller
subspace $\x0s$ as we will see).

If $J(u)$ is a measure then the Kantorovich norm of $J(u)$ correspond
to the mass of the minimal connection of $J(u)$ which plays a key role
in the relaxation of the Dirichlet functional (see \cite{brezis2}) as
well as in the Ginzburg-Landau theory \cite{brezis2, sandier}.

Another interesting issue is to establish weather an $N$-form (or
equivalently a distribution) is a distributional Jacobian or not (see
for example \cite{D-M, R-Y}). A related question is to establish when
a distribution in $\lipp$ can be approximated weakly by Jacobians and
in the negative case one may try to give a quantitative answer. We
will study this question in the last section of the paper.

The next example introduces a relevant class of distributions in
$\lipp$ which appear as distributional Jacobians in the theory of the
Ginzburg-Landau equations and which has been studied in
\cite{B-B-M,P1,P2}.

\begin{example}
  Given two sequences of points $\{p_i\},\ \{n_i\} $ in $\Omega$ such
  that $\sum_{i=0}^\infty|p_i-n_i|<\infty$ we consider the
  distribution $$\langle T,u\rangle:=\sum_{i=1}^\infty
  u(p_i)-u(n_i)\qquad\forall u\in\lip.$$
  It is easy to see that
  $T\in\lipp$; let us show that $T$ is also in the space $\lips$. 
  Let $\eps>0 $ and consider $k$ such
  that $\sum_{i>k}|p_i-n_i|\le\eps$; then $$|\langle
  T,u\rangle|\le\sum_{i\le k}|u(p_i)-u(n_i)|
  +\sum_{k<i}|u(p_i)-u(n_i)|\le 2k\|u\|_\infty+\eps Lip(u).$$
  Notice
  that in this case it is not possible to define a positive and a
  negative part of $f$.
\end{example}

The plan of the paper is the following:
we will extend the Kantorovich norm and the mass
transportation problem to the space of distributions $f\in \lipp$.
We show that for a wide class of
source data (namely for $f\in\lips$) the transport density $\mu$ still
remains a measure. We will then show that the space $\xn$ may be
decomposed in the direct sum of $\x0s$ and of the space of divergences
of normal measures. This decomposition is ``orthogonal'' in the sense
of the Wasserstein norm.
Some of the ideas are connected with the papers
\cite{P1,P2}, \cite{B-C-J} and \cite{O}, where very interesting tools
for studying the distributions of $\lipp$ have been introduced. In
particular our Theorem \ref{exist&dual} extends (in the most natural
reformulation) Theorem 1 of \cite{O} to the case $k=1$ of functions in
$\C^{0,1}(\Omega)$.

\section{Preliminary results of Functional Analysis and Measure Theory}
\subsection{Completion of dual spaces} 

It looks nice to formulate the question of existence of a Kantorovich potential 
(see (\ref{formulazione3})) in an abstract setting. We are then considering two 
separable normed spaces $X$ and $Y$, with $Y\hookrightarrow X$ (in our context 
$X$ will be $\C(\Omega) $ and $Y=\lip$). We assume that the
injection above is dense (i.e. $Y$ is dense in $X$ with the norm of
$X$) and compact (i.e. bounded sequences in $Y$ are relatively compact
in $X$). Moreover we assume that the norm of $Y$ is l.s.c. with
respect to the convergence in $X$.

 It is well known as James's theorem
(see for instance Remark 3 in Chapter 1 of \cite{brezis}) that $Y$ is
reflexive if and only if the supremum in the dual norm of $Y'$
$$\|f\|_{Y'}:=\sup\{\langle f, u\rangle\ :\ u\in Y,\ \|u\|_Y\le1\}$$
is attained for every $f\in Y'$. In the situation which is interesting
for our purposes $Y$ is not reflexive and we are going to consider the
elements $f$ of a space that is smaller than the whole dual space
$Y'$. More precisely, we denote by $Y^\#$ the space of all $f\in Y'$
such that for every $\eps>0$ there exists $C_\eps>0$ which verifies
$$|\langle f,u\rangle|\le C_\eps\|u\|_X + \eps\|u\|_Y\qquad\forall
u\in Y.$$

We endow $Y^\#$ with the norm of $Y'$. The obvious inclusions are
that $X'\subset Y^\#\subset Y'$. In addition to the conditions above
on $X$ and $Y$ we assume that there exists a family
$T_\delta\in\L(X,Y)$ such that
\begin{equation}\label{ipot1}
  \lim_{\delta \to 0} \|T_\delta u-u \|_X=0 \ \ \mbox{for every}\ u\in Y;
\end{equation}
\begin{equation}\label{ipot2}
  T_\delta\ \mbox{is bounded in}\ \L(X,Y).\ (\mbox{i.e.}\ \|T_\delta u\|_Y
  \le C\|u\|_X).
\end{equation}
When $X$ and $Y$ are Hilbert spaces it is enough to take
$T_\delta=j^t$ for all $\delta$, where $j^t$ is the transpose of the
injection map $j:Y\hookrightarrow X$. When $X=\C_b(\Omega)$ or $\C(\Omega)$ and
$Y=\lip$ it is enough to take $T_\delta u=\rho_\delta*u$ where
$\rho_\delta$ is a family of smooth convolution kernels converging
weakly* to $\delta_0$.

\begin{prop} 
  The space $Y^\#$ coincides with the closure of $X'$ in the dual
  space $Y'$.
\end{prop}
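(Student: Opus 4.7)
The plan is to prove the two inclusions $\overline{X'}\subset Y^\#$ and $Y^\#\subset \overline{X'}$ (closure taken in $Y'$). The first is easy: any $f\in X'$ satisfies $|\langle f,u\rangle|\le \|f\|_{X'}\|u\|_X$, so the defining estimate of $Y^\#$ holds trivially with $C_\eps=\|f\|_{X'}$ for every $\eps>0$; hence $X'\subset Y^\#$. It then suffices to check that $Y^\#$ is closed in $Y'$: given $f_n\to f$ in $Y'$ with $f_n\in Y^\#$ and $\eps>0$, I select $n$ with $\|f-f_n\|_{Y'}\le \eps/2$ and a constant $C_{\eps/2}$ witnessing membership of $f_n$ in $Y^\#$ at level $\eps/2$; adding the two bounds gives the estimate required for $f$ at level $\eps$.

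For the reverse inclusion, the natural candidates to approximate $f\in Y^\#$ are the transposes
\begin{equation*}
f_\delta:=T_\delta^t f,\qquad \langle f_\delta,u\rangle_{X',X}:=\langle f,T_\delta u\rangle_{Y',Y}.
\end{equation*}
Hypothesis (\ref{ipot2}) gives $f_\delta\in X'$, and $\langle f-f_\delta,u\rangle=\langle f,u-T_\delta u\rangle$ for $u\in Y$. Using the defining property of $Y^\#$, for every $\eps>0$ there is $C_\eps$ such that
\begin{equation*}
|\langle f-f_\delta,u\rangle|\le C_\eps\,\|u-T_\delta u\|_X+\eps\,\|u-T_\delta u\|_Y.
\end{equation*}
On the unit ball of $Y$, combining $Y\hookrightarrow X$ with (\ref{ipot2}) shows that $\|u-T_\delta u\|_Y$ is bounded by some constant $M$ independent of $\delta$, so the second term contributes at most $M\eps$.

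The key step, and the main obstacle, is upgrading the pointwise convergence (\ref{ipot1}) to the uniform statement
\begin{equation*}
\eta(\delta):=\sup_{\|u\|_Y\le 1}\|u-T_\delta u\|_X\longrightarrow 0\quad\text{as }\delta\to 0,
\end{equation*}
and this is exactly where the compactness of $Y\hookrightarrow X$ must enter. I would argue by contradiction: if $\eta(\delta_n)\ge\alpha>0$ along some $\delta_n\to 0$ with witnesses $u_n$, $\|u_n\|_Y\le 1$, then compactness yields a subsequence converging in $X$ to some $u$, and lower semicontinuity of $\|\cdot\|_Y$ under $X$-convergence forces $u\in Y$ with $\|u\|_Y\le 1$. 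Since $T_\delta$ is uniformly bounded as an operator $X\to X$ (again from (\ref{ipot2}) and $Y\hookrightarrow X$), the triangle inequality
\begin{equation*}
\|u_n-T_{\delta_n}u_n\|_X\le \|u_n-u\|_X+\|u-T_{\delta_n}u\|_X+\|T_{\delta_n}(u-u_n)\|_X
\end{equation*}
together with (\ref{ipot1}) applied at the fixed $u$ and the convergence $u_n\to u$ in $X$ forces the right hand side to zero, a contradiction. Combining everything I obtain $\|f-f_\delta\|_{Y'}\le C_\eps\,\eta(\delta)+M\eps$, so letting $\delta\to 0$ and then $\eps\to 0$ gives $f_\delta\to f$ in $Y'$, proving the claim.
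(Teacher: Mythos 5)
Your proof is correct, and it follows the same basic strategy as the paper's: both directions are handled with the same tools, and in particular both use the approximants $f_\delta = f\circ T_\delta$ for the inclusion $Y^\#\subset\overline{X'}$. The difference is in how much is actually established at the key step. The paper estimates $|\langle f-f_\delta,u\rangle|\le C_\eps\|u-T_\delta u\|_X+\eps\|u-T_\delta u\|_Y$ and then passes to the limit \emph{for each fixed $u$}, using (\ref{ipot1}) pointwise; as written this only yields weak-$*$ convergence $\langle f_\delta,u\rangle\to\langle f,u\rangle$, which the paper asserts is ``enough'' to place $f$ in the closure of $X'$ in $Y'$. Since the closure in question is the norm closure (that is how $Y^\#$ is used throughout, e.g.\ to identify ${\bf X}_0^\sharp(\Omega)$ as the completion of $\M_0(\Omega)$), one really needs $\|f-f_\delta\|_{Y'}\to0$, i.e.\ the estimate must be made uniform over the unit ball of $Y$. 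You identified exactly this point and supplied the missing ingredient: the compactness of $Y\hookrightarrow X$ together with the lower semicontinuity of $\|\cdot\|_Y$ and the uniform $X\to X$ bound on $T_\delta$ upgrades the pointwise convergence (\ref{ipot1}) to $\sup_{\|u\|_Y\le1}\|u-T_\delta u\|_X\to0$, which is precisely what makes $C_\eps\|u-T_\delta u\|_X$ small uniformly. So your argument is not just a rephrasing; it closes a real gap (or at least an unjustified shortcut) in the published proof, and it also makes visible where the standing compactness assumption on the embedding is actually used. The first inclusion is organized slightly differently (you split it into $X'\subset Y^\#$ plus closedness of $Y^\#$ in $Y'$, while the paper does both in one estimate), but that is cosmetic.
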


\begin{proof}
  If $\{f_n\}$ is a sequence in $X'$ which converges to $f$ in the
  dual space $Y'$ we have $$|\langle f,u\rangle|\le|\langle
  f_n,u\rangle|+|\langle f-f_n,u\rangle| \le C_n\|u\|_X+\eps\|u\|_Y$$
  where we denoted $$C_n=\|f_n\|_{X'},\qquad\eps=\|f-f_n\|_{Y'}.$$
  Therefore $f\in Y^\#$. Vice versa, if $f\in Y^\#$, take
  $f_\delta=f\circ T_\delta$; in order to show that $f$ is in the
  closure of $X'$ in the dual space $Y'$ it is enough to show that
  $$\lim_{\delta\to0}\langle f_\delta,u\rangle=\langle
  f,u\rangle\qquad\forall u\in Y.$$
  Fix $u\in Y$; by the definition of
  $Y^\#$ we have for every $\eps>0$ $$|\langle
  f-f_\delta,u\rangle|=|\langle f,u-T_\delta u\rangle| \le
  C_\eps\|u-T_\delta u\|_X+\eps\|u-T_\delta u\|_Y.$$
  By the
  assumptions (\ref{ipot1}) and (\ref{ipot2}), passing to the limit as
  $\delta\to0$ in the inequality above gives
  $$\limsup_{\delta\to0}|\langle
  f-f_\delta,u\rangle|\le\eps\|u\|_Y(1+C)$$
  which implies our claim
  since $\eps>o$ was arbitrary.
\end{proof}

\begin{prop}\label{attained}
  For every $f \in Y^\#$ the supremum in the dual norm $$\sup\{\langle
  f,u\rangle\ :\ u\in Y,\ \|u\|_Y\le1\}$$
  is attained.
\end{prop}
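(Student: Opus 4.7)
The plan is to run the direct method of the calculus of variations on the unit ball of $Y$, using the compact embedding $Y \hookrightarrow X$ to extract a limit and the defining inequality of $Y^\#$ to pass to the limit in the linear functional.

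First I would take a maximizing sequence $\{u_n\} \subset Y$ with $\|u_n\|_Y \le 1$ and $\langle f, u_n\rangle \to M := \sup\{\langle f,u\rangle : u\in Y,\ \|u\|_Y\le 1\}$. Since $\{u_n\}$ is bounded in $Y$ and the injection $Y \hookrightarrow X$ is compact, after passing to a subsequence there exists $u \in X$ such that $u_n \to u$ in $X$. The assumed lower semicontinuity of the $Y$-norm with respect to convergence in $X$ then gives $u \in Y$ and $\|u\|_Y \le \liminf_n \|u_n\|_Y \le 1$, so $u$ is admissible.

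It remains to verify that $\langle f, u_n\rangle \to \langle f, u\rangle$, and this is exactly the role of the condition $f \in Y^\#$. Fix $\eps > 0$ and pick the corresponding $C_\eps > 0$. Applying the defining inequality of $Y^\#$ to $u_n - u \in Y$ yields
\[
|\langle f, u_n - u\rangle| \le C_\eps \|u_n - u\|_X + \eps \|u_n - u\|_Y \le C_\eps \|u_n - u\|_X + 2\eps.
\]
Since $u_n \to u$ in $X$, letting $n \to \infty$ gives $\limsup_n |\langle f, u_n - u\rangle| \le 2\eps$, and the arbitrariness of $\eps$ yields $\langle f, u_n\rangle \to \langle f, u\rangle$. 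Therefore $\langle f, u\rangle = M$, so the supremum is attained at $u$.

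There is no serious obstacle here: the only subtle point is that $Y$ is not reflexive, so one cannot invoke weak compactness on the unit ball of $Y$ directly; the compactness of the embedding $Y \hookrightarrow X$ replaces it, while the $Y^\#$-inequality compensates for the weaker mode of convergence when evaluating $f$ on $u_n - u$. If one tried to prove the analogous statement for a generic $f \in Y'$, the step $|\langle f, u_n - u\rangle| \le C \|u_n - u\|_Y$ would only yield an uninformative bound, which is precisely why the conclusion fails in general (recall James's theorem).
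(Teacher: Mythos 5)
Your proof is correct and follows essentially the same route as the paper's: extract an $X$-convergent subsequence from a maximizing sequence via the compact embedding, then use the defining inequality of $Y^\#$ on $u_n-u$ to pass to the limit in $\langle f,\cdot\rangle$. You are in fact slightly more careful than the paper in explicitly invoking the lower semicontinuity of the $Y$-norm under $X$-convergence to justify that the limit $u$ lies in the unit ball of $Y$, a step the paper leaves implicit.
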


\begin{proof}
  Let $\{u_n\}$ be a maximizing sequence for the dual norm above;
  since the injection $Y\hookrightarrow X$ is compact, we may assume
  that $u_n\to u$ in $X$ for some $u\in Y$ with $\|u\|_Y\le1$.
  Therefore by using the fact that $f\in Y^\#$, $$|\langle
  f,u_n\rangle-\langle f,u\rangle|=|\langle f,u_n-u\rangle| \le
  C_\eps\|u_n-u\|_X+2 \eps.$$
  Passing now to the limit as $n\to\infty$
  and using the fact that $\eps>0$ is arbitrary, we obtain that
  $$\lim_{n\to\infty}\langle f,u_n\rangle=\langle f,u\rangle$$
  which
  shows that $u$ is a maximizer for the dual norm.
\end{proof}

There is another characterization of the elements of $Y^\#$.
\begin{prop}\label{character2}
  We have $f\in Y^\#$ if and only if $\langle f,u_n\rangle\to0$ for
  every $u_n\to 0$ in $X$ with $\|u_n\|_Y$ bounded.
\end{prop}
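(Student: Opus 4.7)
The plan is to argue the two implications directly. The forward direction is an immediate application of the definition of $Y^\#$: assume $f \in Y^\#$ and suppose $u_n \to 0$ in $X$ with $\|u_n\|_Y \le M$. Fixing $\eps > 0$, the defining inequality gives
$$|\langle f, u_n\rangle| \le C_\eps \|u_n\|_X + \eps \|u_n\|_Y \le C_\eps \|u_n\|_X + \eps M.$$
Since $\|u_n\|_X \to 0$, taking $\limsup_n$ yields $\limsup_n |\langle f,u_n\rangle| \le \eps M$, and then letting $\eps \to 0$ finishes this direction.

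For the converse, I would argue by contradiction: assume $f \in Y'$ enjoys the convergence property but $f \notin Y^\#$. Then there exists $\eps_0 > 0$ such that for every integer $n \ge 1$ one can find $u_n \in Y$ (necessarily nonzero) with
$$|\langle f, u_n\rangle| > n \|u_n\|_X + \eps_0 \|u_n\|_Y.$$
Setting $v_n := u_n / \|u_n\|_Y$, we get $\|v_n\|_Y = 1$ and $|\langle f, v_n\rangle| > n \|v_n\|_X + \eps_0$.

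The key remaining step is to show $\|v_n\|_X \to 0$: since $f \in Y'$, we have $|\langle f, v_n\rangle| \le \|f\|_{Y'}\|v_n\|_Y = \|f\|_{Y'}$, so the chain $n\|v_n\|_X + \eps_0 < \|f\|_{Y'}$ forces $\|v_n\|_X \to 0$. Thus $v_n \to 0$ in $X$ with $\|v_n\|_Y$ bounded, yet $|\langle f, v_n\rangle| > \eps_0$ for all $n$, directly contradicting the hypothesis.

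I expect the main obstacle to be the correct choice of normalization in the contradiction step (dividing by $\|u_n\|_Y$ rather than by $\|u_n\|_X$ or by $|\langle f, u_n\rangle|$), since that is what simultaneously bounds the $Y$-norm, forces the $X$-norm to vanish, and keeps $|\langle f,v_n\rangle|$ bounded away from zero. The a priori membership $f \in Y'$ is either built into the statement (as suggested by the definition of $Y^\#$ as a subspace of $Y'$) or can be extracted from the convergence hypothesis by a short scaling argument analogous to the one above.
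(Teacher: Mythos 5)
Your proof is correct and follows essentially the same route as the paper's: the forward direction is the direct estimate from the definition of $Y^\#$, and the converse is the same contradiction argument with the sequence satisfying $|\langle f,u_n\rangle|> n\|u_n\|_X+\eps_0\|u_n\|_Y$, normalized so that $\|u_n\|_Y=1$, with the bound $\|u_n\|_X\le \|f\|_{Y'}/n$ forcing convergence to $0$ in $X$. The membership $f\in Y'$ is indeed taken as a standing hypothesis in the paper (the statement concerns elements of $Y'$), so no extra scaling argument is needed.
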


\begin{proof}
  Take $f\in Y'$ and $u_n\to0$ in $X$ with $\|u_n\|_Y$ bounded. Then
  $$|\langle f,u_n\rangle|\le C_\eps\|u_n\|_X+\eps\|u_n\|_Y$$
  which
  gives, at the limit as $n\to\infty$, $\langle f,u_n\rangle\to0$.

  Vice versa assume by contradiction that there exist $\eps_0>0$ and a
  sequence $\{u_n\}$ such that
  \begin{equation}\label{contrad}
    \langle f,u_n\rangle\ge n\|u_n\|_X+\eps_0\|u_n\|_Y.
  \end{equation}
  With no loss of generality we can suppose $\|u_n\|_Y=1$ so that
  $$\|u_n\|_X\le\frac{1}{n}(\langle
  f,u_n\rangle-\eps_0)\le\frac{K}{n}.$$
  Then $u_n\to0$ in $X$, which
  implies $\langle f,u_n\rangle\to0$ by the hypothesis. This is a
  contradiction with (\ref{contrad}), which gives $\langle
  f,u_n\rangle\ge\eps_0$.
\end{proof}

\subsection{Some tangential calculus for measures}
Let $\mu$ be a Radon measure in $\R^N$. Following  \cite{CV,rev,BF}, 
we introduce the tangent space $\T_{\mu}$ to the measure $\mu$ which is 
  defined $\mu$ a.e. by setting  $\T_\mu(x):=\N_\mu^\bot(x)$ where
(see \cite{B-C-J} for further details related to the $L^\infty$-case under consideration here):
\begin{eqnarray*}
\N_\mu(x)&=& \{ \xi(x) \ :\ \xi \in \N_\mu \} \qquad \text{being} \\
\N_\mu &=&\{\xi\in (L^\infty_\mu)^N \ :\ \exists u_n\to0,\ u_n\ \mbox{smooth},\ \nabla
u_n \rightharpoonup\xi\ \mbox{weakly* in }L^\infty_\mu\}
\end{eqnarray*}
It turns out that the subspaces $\T_\mu$ and $\N_\mu$) are local in the sense that 
$\xi\in \T_\mu$ (resp. $\N_\mu$) iff  $\xi(x)\in \T_\mu(x)$
(resp. $\xi(x)\in \N_\mu(x)$) holds $\mu$-a.e.

We may now define an intrinsic notion of tangential and normal vector measures
in $\M^N(\Omega)$. It will be useful in the construction of a complement
$\x0s$ in the space
 $\xn$.

\begin{definition}\label{tangmeas} Let $\lambda\in \M(\Omega)^N$.
If $\lambda$ can be decomposed as $\lambda=v\, \mu$ where $\mu$ is a
positive Radon measure and 
$v \in (L^1_\mu)^N$ satisfies  $v(x)\in\T_\mu(x)$ $\mu$-a.e., 
then we say that $\lambda$ is a
  tangential measure. If alternatively $v(x)\in\N_\mu(x)$ $\mu$-a.e.
  we  say that $\lambda$ is a normal measure. 
 We will denote by $\tang$ the space of tangential
  measures and by
  $\norm$ the space of normal measures.
  \end{definition}
  Clearly we have the decomposition in direct sum 
  $$(\M(\Omega))^N\ =\ \tang \oplus \norm\ .$$

The following is a basic and intuitive lemma on tangent spaces which
will be used in the last section of the paper.

\begin{lemma}\label{submeas}
Let $\alpha$ and $\mu$ be two nonnegative Radon measures in $\R^N$ such that 
$\mu= \alpha +\mu_s$ where $\mu_s$ is singular with respect to $\alpha$. Then
$$ T_\mu(x)\subset T_\alpha (x) \ \ \ \alpha-a.e..$$ 
\end{lemma}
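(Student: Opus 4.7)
The plan is to prove the equivalent dual inclusion
\[ \N_\alpha(x)\ \subset\ \N_\mu(x) \qquad \text{for } \alpha\text{-a.e. } x, \]
from which the lemma follows by taking orthogonal complements in $\R^N$, since $\T_\nu=\N_\nu^\perp$ and taking perpendicular reverses inclusions of subspaces.

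The core step is to show that every $\xi\in\N_\alpha$ admits a lifting $\eta\in\N_\mu$ with $\eta=\xi$ $\alpha$-a.e. Let $u_n$ be smooth with $u_n\to 0$ and $\nabla u_n\rightharpoonup\xi$ weakly* in $L^\infty_\alpha$. The uniform bound $\|\nabla u_n\|_{L^\infty_\alpha}\le C$, combined with continuity of $\nabla u_n$ and the definition of $\spt\alpha$, gives $|\nabla u_n|\le C$ pointwise on the closed set $\spt\alpha$. A cutoff-and-mollification of $u_n$ produces a smooth sequence $v_n\to 0$ with $\nabla v_n$ bounded in $L^\infty(\R^N)$ (hence in $L^\infty_\mu$) and $\nabla v_n=\nabla u_n$ on $\spt\alpha$. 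By Banach--Alaoglu, a subsequence of $\nabla v_n$ converges weakly* in $(L^\infty_\mu)^N$ to some $\eta$, and $\eta\in\N_\mu$ by its very construction.

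To identify $\eta$ with $\xi$ on the $\alpha$-part, let $E$ be a Borel set from the Lebesgue decomposition with $\alpha(E^c)=\mu_s(E)=0$, so that for every $\R^N$-valued $\varphi\in L^1_\alpha$ the product $\varphi\chi_E$ belongs to $L^1_\mu$. Weak* convergence in $L^\infty_\mu$ gives
\[ \int\varphi\cdot\nabla v_n\,d\alpha\ =\ \int\varphi\chi_E\cdot\nabla v_n\,d\mu\ \longrightarrow\ \int\varphi\chi_E\cdot\eta\,d\mu\ =\ \int\varphi\cdot\eta\,d\alpha. \]
Since $\nabla v_n=\nabla u_n$ on $\spt\alpha$, the same limit also equals $\int\varphi\cdot\xi\,d\alpha$; arbitrariness of $\varphi$ forces $\eta=\xi$ $\alpha$-a.e.

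Finally, to upgrade the "for each $\xi$" statement to a pointwise inclusion, pick a countable family $\{\xi_k\}\subset\N_\alpha$ whose values $\xi_k(x)$ span the subspace $\N_\alpha(x)$ of $\R^N$ for $\alpha$-a.e.\ $x$; this is available from a measurable-selection argument for the measurable family of subspaces $x\mapsto\N_\alpha(x)$. Lifting each $\xi_k$ to $\eta_k\in\N_\mu$ and invoking the stated locality of $\N_\mu$ gives $\xi_k(x)=\eta_k(x)\in\N_\mu(x)$ off a single $\alpha$-null set, so $\N_\alpha(x)=\mathrm{span}\{\xi_k(x)\}\subset\N_\mu(x)$ there. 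The main obstacle is the truncation step producing $v_n$: because $\spt\mu_s$ may accumulate on $\spt\alpha$, a crude cutoff need not have a bounded gradient, and one must either tune its scale to the decay rate $\|u_n\|_\infty\to 0$, or first Lipschitz-extend $u_n|_{\spt\alpha}$ to all of $\R^N$ by McShane's theorem and re-mollify at a scale $\eps_n\downarrow 0$ preserving $v_n\to 0$.
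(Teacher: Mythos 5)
Your overall strategy coincides with the paper's: prove the dual inclusion $\N_\alpha\subset\N_\mu$ by taking the approximating sequence $\{\nabla u_n\}$ of a normal field $\xi\in\N_\alpha$, extracting a weak* limit $\eta$ in $(L^\infty_\mu)^N$, identifying $\eta=\xi$ $\alpha$-a.e.\ by testing against $\varphi\chi_E$ with $E$ a Borel set carrying $\alpha$ and $\mu_s$-null, and then passing to the pointwise inclusion via a countable spanning family and the locality of $\N_\mu$. Those three steps are correct, and your identification and locality arguments are in fact spelled out more carefully than in the paper, which compresses them into ``then $\tilde\xi=\xi$ $\alpha$-a.e.\ and the conclusion follows.''

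The gap is exactly where you place it yourself: the construction of $v_n$. You assert the existence of smooth $v_n\to0$ with $\nabla v_n$ bounded in $L^\infty(\R^N)$ and $\nabla v_n=\nabla u_n$ on $\spt\alpha$, and then concede in your closing sentence that you cannot build it; neither sketched repair works as stated. The cutoff tuned to $\|u_n\|_\infty$ controls the term $u_n\nabla\chi_n$ but not $\chi_n\nabla u_n$: your hypothesis bounds $|\nabla u_n|$ only on $\spt\alpha$, while $\mu_s$ may charge points arbitrarily close to $\spt\alpha$ (hence inside the region where $\chi_n\equiv1$) at which $|\nabla u_n|$ is uncontrolled. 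The McShane route fails for a related reason: the Lipschitz constant of $u_n|_{\spt\alpha}$ is not controlled by $\sup_{\spt\alpha}|\nabla u_n|$ unless $\spt\alpha$ is quasiconvex --- two nearby points of $\spt\alpha$ may be joined only through regions where $\nabla u_n$ is huge --- and after re-mollification you lose $\nabla v_n=\nabla u_n$ on $\spt\alpha$, so the coincidence of the two weak* limits would have to be re-established. The way out is that no such surgery is needed: in the definition of a normal field used in this paper (taken from \cite{B-C-J}, and invoked explicitly in the proof of Proposition \ref{divtang}), the approximating sequence comes with a uniform bound on $\|\nabla u_n\|_{L^\infty(\Omega)}$, i.e.\ a genuine sup bound on all of $\Omega$, not merely an $L^\infty_\alpha$ bound. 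With that, $\{\nabla u_n\}$ is automatically bounded in $(L^\infty_\mu)^N$ for \emph{every} $\mu$, you may take $v_n=u_n$, and the remainder of your argument closes the proof --- which is precisely what the paper's proof does.
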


\begin{proof} We will prove that $\N_\alpha \subset \N_\mu$ and then the thesis 
will follow from the definition of tangent space. Since $\mu= \alpha +\mu_s$ where $\mu_s$ is
  singular with respect to $\alpha$, if $g \in (L^1_\mu)^N$ then $g \in (L^1_\alpha)^N$.
 Consider $\xi \in \N_\alpha$, then $\xi$ is the weak-* limit in $(L^\infty _\alpha)^N$ 
of a sequence $\{\nabla \varphi_n\}$ with $|\varphi_n| \leq c$ and $\varphi_n \to 0$ uniformly.
The sequence  $\{\nabla \varphi_n\}$ is bounded in $(L^\infty_\mu)^N$ and up to subsequences  
converges to $\tilde{\xi}$ weakly* in $(L^\infty_\mu)^N$.  Let $g \in (L^1_\mu)^N$; then
on the one hand 
$$\int \nabla \varphi _n \cdot g d \alpha \to \int \xi \cdot g d \alpha  $$
on the other hand
 $$\int \nabla \varphi _n \cdot g d \mu \to \int \tilde{\xi} \cdot g d \mu.$$
The last equality reads as
 $$\int \nabla \varphi _n \cdot g d \alpha+ \int \nabla \varphi _n \cdot g d \mu_s 
\to \int \tilde{\xi} \cdot g d \alpha+ \int \tilde{\xi} \cdot g d \mu_s, $$
then $\tilde{\xi}=\xi$ $\alpha-$a.e. and the conclusion follows.

\end{proof}

\section{The Optimal Transport Problem}

We will now  extend the different formulations of the optimal transport
problem to a distribution $f\in\lipp$, we will compare these formulations
to the classical case of measures and then  prove the main existence
theorems for minimizers.

\subsection{Kantorovich potential and optimal transport measure} 

In this subsection we will see that the classical theory can be easily extended
provided the distribution $f$ belongs to the subspace $\lips$.
To that aim we simply particularize the results of the previous section in the case $X=\C(\Omega) $ and $Y=\lip$ endowed with their natural norms. 
Then $Y^\sharp$ coincides with $\lips$ and by Proposition \ref{attained} if
$f\in\x0s$ then
\begin{equation}\label{newflat}
  \sup\{\langle f,u\rangle\ :\ u\in\lip,\ \|u\|_{\lip}\le1\}
\end{equation}
is attained. If $f$ has ``zero average'', that is $\langle
f,1\rangle=0$, we may replace the constraint $\|u\|_{\lip}\le1$ by the
seminorm inequality $\|\nabla u\|_{L^\infty(\Omega)}\le1$ thus
obtaining the flat norm of $f$.  The subspace of the restrictions to
$\Omega$ of functions in   $\C^1_0(\R^N)$ is dense
in $Lip (\Omega)$ for the $\C^0$ topology. Then if $f
\in\lips$ by Proposition \ref{character2} $$\max_{\varphi\in
  Lip_1}\langle f,\varphi\rangle =\sup_{\C^1_0(\R^N)\cap Lip_1}\langle
f,\varphi\rangle.$$
\begin{definition}\label{Was1} For every $f\in \xn$ the
  Wasserstein norm of $f$ is defined by
  \begin{equation}\label{was1}
    \W^1(f):=\sup\{\langle f,u\rangle\ :\ u\in\ \C^1_0(\R^N),\ \|\nabla u\|_\infty\le1\}.
  \end{equation}
\end{definition}
By Proposition \ref{character2} if $f\in\lips$ the sup in (\ref{was1})
does not change if performed on $Lip_1(\Omega)$ instead than
$\C^1_0(\R^N)\cap Lip_1$ and it is attained on $Lip_1(\Omega)$.  
We notice however that the supremum is not achieved in general for
$f\in \xn\subset\lips$. It is therefore worth to characterize those elements 
$f$ which belong to $\lips$.
Two such characterizations appeared in \cite{B-C-J}; we report here the
first one while the second will be given later in this
section.

\begin{theorem}[\cite{B-C-J}]\label{divelle1}
  Let $f\in\lipp$, then $f\in\lips$ if and only if there exists a
  vector field $\nu$ in $L^1(\Omega,\R^N)$ such that $-\div\nu=f$.
\end{theorem}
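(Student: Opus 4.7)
The proof splits into two directions, the first being routine and the second requiring duality.

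For the easy direction $(\Leftarrow)$, assume $f=-\div\nu$ with $\nu\in L^1(\Omega,\R^N)$ (extended by zero outside $\Omega$). Given $\eps>0$, pick by density $\nu_\eps\in\C_c^\infty(\mathring{\Omega},\R^N)$ with $\|\nu-\nu_\eps\|_{L^1}<\eps$. For $\varphi\in\D(\R^N)$, integration by parts in the second term below (justified by the interior compact support of $\nu_\eps$) gives
$$\langle f,\varphi\rangle=\int_\Omega\nabla\varphi\cdot(\nu-\nu_\eps)\,dx-\int_\Omega\varphi\,\div\nu_\eps\,dx,$$
so $|\langle f,\varphi\rangle|\le\eps\|\nabla\varphi\|_{L^\infty(\Omega)}+\|\div\nu_\eps\|_{L^1}\|\varphi\|_{L^\infty(\Omega)}$. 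This is precisely the defining inequality of $\lips$, with $C_\eps=\|\div\nu_\eps\|_{L^1}$.

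For the harder direction $(\Rightarrow)$, assume $f\in\lips$. My plan is density plus closedness applied to the subspace $S:=\{-\div\nu:\nu\in L^1(\Omega,\R^N)\}\subset\lips$. By Proposition~\ref{attained} with $X=\C(\Omega)$ and $Y=\lip$, the space $\lips$ is the closure of $\M_0(\Omega)$ in the Wasserstein norm. Every $\mu\in\M_0(\Omega)$ lies in $S$ (for instance, take $\nabla u$ where $u$ is a truncated Newtonian potential of $\mu$, together with a small $L^1$ correction to absorb the smooth error produced by the cutoff). Thus $S\supset\M_0(\Omega)$ is dense in $\lips$, and it suffices to show $S$ is closed.

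For closedness I would run a Hahn--Banach argument driven by the $\lips$ characterization. Consider the subspace $V=\{(\varphi|_\Omega,\nabla\varphi|_\Omega):\varphi\in\D(\R^N)\}\subset\C(\Omega)\times\C(\Omega,\R^N)$ and the linear functional $L(\varphi,\nabla\varphi):=\langle f,\varphi\rangle$ on $V$; this is well-defined because $\Omega$ is connected and $\langle f,\mathbf{1}\rangle=0$. For each $\eps>0$, the $\lips$ characterization yields $|L(\varphi,\nabla\varphi)|\le C_\eps\|\varphi\|_\infty+\eps\|\nabla\varphi\|_\infty$, so by Hahn--Banach $L$ extends to $\C(\Omega)\times\C(\Omega,\R^N)$ with the same bound; Riesz representation then produces a decomposition
$$f=\mu_\eps-\div\nu_\eps,\qquad\mu_\eps\in\M_0(\Omega),\ \|\mu_\eps\|_{\M}\le C_\eps,\ \nu_\eps\in\M(\Omega,\R^N),\ \|\nu_\eps\|_{\M}\le\eps.$$

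The main obstacle is that $\nu_\eps$ is only a vector measure, not an $L^1$ field. I would circumvent this by iteration: apply the decomposition above with $\eps_n=2^{-n}$ to the successive residuals $-\div\nu_{\eps_{n-1}}\in\lips$, realize each measure part $\mu_n\in\M_0(\Omega)$ as $-\div\alpha_n$ with $\alpha_n\in L^1$ (again using a truncated Newtonian potential, which provides $\|\alpha_n\|_{L^1}\le C_\Omega\|\mu_n\|_{\M}$), and sum telescopically so that formally $f=-\div\sum_n\alpha_n$. The delicate point is quantitative control of the Hahn--Banach constants $C_{\eps_n}$ at each stage, ensuring the series $\sum_n\alpha_n$ converges in $L^1(\Omega,\R^N)$; this is enforced by the geometric decay in $n$ of the Wasserstein norms $\W^1(-\div\nu_{\eps_{n-1}})\le\eps_{n-1}$ of the successive residuals, which feeds into the $\lips$ characterization applied at each step.
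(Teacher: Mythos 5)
The paper offers no proof of this theorem---it is quoted from \cite{B-C-J}---so your argument has to stand on its own. Your direction $(\Leftarrow)$ is correct. In the direction $(\Rightarrow)$, the reduction to ``$\M_0(\Omega)$ is $\W^1$-dense in $\lips$ and the set of $L^1$-divergences is closed'' is a reasonable plan (note the density statement is the content of the first, unnumbered Proposition of Section~2, not of Proposition~\ref{attained}), and the Hahn--Banach step producing $f=\mu_\eps-\div\nu_\eps$ with $\|\mu_\eps\|_{\M}\le C_\eps$ and $\|\nu_\eps\|_{\M}\le\eps$ is sound. The problem is the final summability claim, which is where the whole proof lives.

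The iteration needs $\sum_n\|\alpha_n\|_{L^1}<\infty$, and your only bound is $\|\alpha_n\|_{L^1}\le C_\Omega\|\mu_n\|_{\M}\le C_\Omega\,C_{\eps_n}(g_{n-1})$, where $g_{n-1}=-\div\nu_{\eps_{n-1}}$ is the residual and $C_{\eps_n}(g_{n-1})$ is its constant in the definition of $\lips$. You assert that the decay $\W^1(g_{n-1})\le\eps_{n-1}$ controls these constants; it does not. Take $g=-\div\nu$ with $\nu=\sum_{i=1}^{M}\tau_i\,\H^1\res[n_i,p_i]$, all $|p_i-n_i|=d$ and $Md=1$: then $\|\nu\|_{\M}=1$ and $\W^1(g)\le 1$, yet testing against a sum of bumps of height $d/2$ and Lipschitz constant $1$ gives $C_\eta(g)\ge 2(1-\eta)/d$, which blows up as $d\to0$. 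Since Hahn--Banach gives no structural information on $\nu_{\eps_{n-1}}$ beyond its total variation, nothing prevents the masses $\|\mu_n\|_{\M}$ from exploding, and $\sum\alpha_n$ need not converge in $L^1$. The missing ingredient is a \emph{quantitative} realization lemma: every $\mu\in\M_0(\Omega)$ equals $-\div\alpha$ for some $\alpha\in L^1(\Omega,\R^N)$ with $\|\alpha\|_{L^1}\le C(\Omega)\,\W^1(\mu)$, not merely $\le C(\Omega)\|\mu\|_{\M}$ as a Newtonian potential yields. One proves it by smearing each transport ray of an optimal plan $\gamma$ for $\mu^+,\mu^-$ into a spindle-shaped $L^1$ field of mass $\le C|x-y|$ contained in $\Omega$ (using convexity) and integrating in $d\gamma$. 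With that lemma the Hahn--Banach iteration becomes unnecessary: choose $\mu_k\in\M_0(\Omega)$ with $\W^1(f-\mu_k)\le 2^{-k}$, realize each difference $\mu_{k+1}-\mu_k$ by an $L^1$ field of norm $\le C\,2^{-k+1}$, and telescope. This is essentially the route of \cite{B-C-J}.
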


Let us now introduce a duality argument: if we define the mapping
$$h(p)=-\sup\{\langle f,u\rangle\ :\ u\in\lip,\ \|\nabla
u+p\|_{L^\infty(\Omega)}\le1\}$$
for every continuous function $p$, we
have that the Fenchel transform defined for all vector measure
$\lambda$ by
$$h^*(\lambda)=\sup_p\langle\lambda,p\rangle-h(p)=\sup_{p,u}
\{\langle\lambda,p\rangle+\langle f,u\rangle\ :\ \|\nabla
u+p\|_{L^\infty(\Omega)}\le1\},$$
is given by $$h^*(\lambda)=\left\{
  \begin{array}{ll}
    \int|\lambda| &\mbox{if}\ -\div\lambda=f\ \mbox{in}\ \D'\\
    +\infty &\mbox{otherwise}.
  \end{array}
\right.$$
The duality relation $\inf_\lambda h^*(\lambda)=-h(0)$ then
reads
\begin{multline}\label{min2}
  \max\{\langle f,u \rangle \ : \ \|\nabla u\|_{L^\infty (\Omega)} \le1\}\\
    =\min\{\int|\lambda|\ :\ \lambda\in\M^n(\Omega),\ -\div\lambda=f\ \in\ \D'\}.
 \end{multline}
The existence and the structure of an optimal $\lambda$ for the right
hand side of (\ref{min2}) has been discussed in \cite{B-C-J} for 
$f\in \lips$ and will be analyzed for $f \in \xn$ in the
next section.  We summarize as follows.
\begin{theorem}For every $f \in \lips$
  \begin{enumerate}
  \item there exists a Kantorovich potential $u$ which maximizes the
    quantity $$\W^1(f)=\max \{\langle f,u \rangle \ : \ \|\nabla
    u\|_{L^\infty (\Omega)} \le1\};$$
  \item there exists an optimal measure $\lambda$ which solves the
    problem
    \begin{equation}\label{divcurr}
      \min \{\|\lambda\|\ :\ -\div\lambda=f\ \in\ \D'\};
    \end{equation}
  \item the two extremal values in (\ref{newflat}) and (\ref{divcurr})
    are equal; i.e.
    \begin{equation}
      \W^1(f)=\min\{\|\lambda\|\ :\ -\div\lambda=f\ \in\ \D'\}.
    \end{equation}
  \end{enumerate}
\end{theorem}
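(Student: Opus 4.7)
The plan is to settle the three assertions in turn, leveraging the functional-analytic machinery of Section~2 together with Theorem~\ref{divelle1}.

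For (1), I would take a maximizing sequence $\{u_n\}\subset\lip$ for the supremum in (\ref{newflat}); by the density remark preceding Definition~\ref{Was1}, this supremum coincides with $\W^1(f)$. Because $\langle f,1\rangle=0$, one may replace each $u_n$ by $u_n-u_n(x_0)$ for a fixed $x_0\in\Omega$ without altering $\langle f,u_n\rangle$; the resulting sequence is equi-Lipschitz with constant~$1$ and uniformly bounded by the diameter of $\Omega$. Arzel\`a--Ascoli then yields a subsequence converging uniformly on $\Omega$ to some $u\in\lip$ with $\|\nabla u\|_{L^\infty(\Omega)}\le1$. To transfer the limit inside the duality pairing, apply Proposition~\ref{character2} (with $X=\C(\Omega)$, $Y=\lip$) to the sequence $u_n-u$, which tends to $0$ in $\C(\Omega)$ while remaining bounded in $\lip$; this gives $\langle f,u_n\rangle\to\langle f,u\rangle$, so $u$ is a maximizer.

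For (2), Theorem~\ref{divelle1} first ensures that the admissible set
\begin{equation*}
\mathcal{A}(f):=\{\lambda\in\M(\Omega)^N \ :\ -\div\lambda=f \text{ in } \D'\}
\end{equation*}
is non-empty, so the infimum in (\ref{divcurr}) is finite. Let $\{\lambda_n\}\subset\mathcal{A}(f)$ be minimizing; then $\|\lambda_n\|$ is bounded, and Banach--Alaoglu (via the duality $\M(\Omega)^N=(\C_0(\Omega;\R^N))'$) extracts a subsequence converging weakly-$*$ to some $\lambda\in\M(\Omega)^N$. The distributional divergence is weakly-$*$ continuous on norm-bounded sets of vector measures, since it suffices to pair with $\nabla\varphi$ for $\varphi\in\D(\R^N)$; hence $-\div\lambda=f$. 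Weak-$*$ lower semicontinuity of the total variation then gives $\|\lambda\|\le\liminf_n\|\lambda_n\|$, so $\lambda$ is optimal.

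For (3), equality of the two extremal values is exactly the Fenchel duality relation~(\ref{min2}) derived from the convex conjugate of $h$ just before the theorem statement; parts (1) and (2) then upgrade the $\sup=\inf$ into a $\max=\min$. I expect the main obstacle to be the Fenchel duality step itself, whose standard formulation requires a qualification condition, namely the continuity of $p\mapsto h(p)$ at the origin of $\C(\Omega;\R^N)$ with the sup-norm. This follows from the bracketing
\begin{equation*}
-(1+\|p\|_\infty)\,\W^1(f) \ \le\ h(p) \ \le\ -(1-\|p\|_\infty)\,\W^1(f) \qquad (\|p\|_\infty<1),
\end{equation*}
obtained by inserting the inclusions $\{u : \|\nabla u\|_\infty\le 1-\|p\|_\infty\}\subset\{u : \|\nabla u+p\|_\infty\le 1\}\subset\{u : \|\nabla u\|_\infty\le 1+\|p\|_\infty\}$ into the definition of $h$. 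This shows that $h$ is locally bounded and hence continuous near $0$, so the Fenchel--Rockafellar theorem applies and the duality has no gap.
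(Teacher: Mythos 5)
Your proposal is correct, and its skeleton matches the paper's: the paper obtains (1) by specializing Proposition \ref{attained} to $X=\C(\Omega)$, $Y=\lip$ (whose abstract proof — maximizing sequence, compact injection, then the defining estimate of $Y^\#$ to pass to the limit in the pairing — is exactly your Arzel\`a--Ascoli argument, with your normalization $u_n-u_n(x_0)$ handling the fact that only the gradient is constrained), and it obtains (3) from the Fenchel duality relation (\ref{min2}). Where you genuinely diverge is in (2): the paper does not prove attainment of the minimum in (\ref{divcurr}) at all, but refers to \cite{B-C-J} for the existence and structure of the optimal $\lambda$ (alternatively one can read it off the duality, since continuity of $h$ at $0$ makes $h^*$ coercive and weak-$*$ l.s.c.). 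Your direct-method proof — non-emptiness of the admissible class via Theorem \ref{divelle1}, weak-$*$ compactness of bounded sets of vector measures, weak-$*$ closedness of the divergence constraint, and lower semicontinuity of the total variation — is self-contained and more elementary than chasing the citation. Likewise, your verification of the qualification condition for Fenchel--Rockafellar (local boundedness of $h$ near $p=0$ via the bracketing of the constraint sets, plus convexity of $h$ as an infimal projection) fills a step the paper leaves implicit when it asserts $\inf_\lambda h^*(\lambda)=-h(0)$. The only caveat, at the same level of informality as the paper itself, is that the perturbation-function framework behind the computation of $h^*$ (pairing $\nabla u\in L^\infty$ against continuous perturbations $p$ and measures $\lambda$) is taken for granted in both your argument and the paper's; you correctly isolate continuity of $h$ at the origin as the only hypothesis that actually needs checking.
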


Note that, being $f\in\lips$, the equality $-\div\lambda=f$ can be
equivalently considered either in $\D'$ or in the duality $\langle
Lip'(\Omega),\lip\rangle$. If $\mu$ denotes the total variation
$|\lambda|$ of $\lambda$, we can write $\lambda=v\mu$ for a
suitable vector field $v \in(L^1_\mu(\Omega))^N$. The measure
$\mu$ is the transport density which appear in the Monge-Kantorovich
PDE (\ref{formulazione2}) and we remark that $\mu$ is still a measure
even if $f$ is only in $\lips$.

The next proposition  (proved in \cite{B-C-J}) characterizes the distributions in $\lips$
as the divergence of tangential measures in $\tang$. 
We report here the proof of one of the two implications.

\begin{prop}\label{divtang}
  If $v \in(L^1_\mu)^N$ is such that $\div(v \mu)\in\lips$,
  then we have $v(x)\in\T_\mu(x)$ for $\mu$-a.e. $x$. Vice versa
  if $v (x)\in\T_\mu(x)$ $\mu$-a.e. then
  $\div(v \mu)\in\lips$.
\end{prop}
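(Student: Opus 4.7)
The plan is to prove each implication through the characterization of $\lips$ furnished by Proposition \ref{character2}: a distribution $f \in \lipp$ belongs to $\lips$ if and only if $\langle f, u_n \rangle \to 0$ whenever $u_n \to 0$ uniformly on $\Omega$ with $\|u_n\|_{\lip}$ bounded. Applied to $f = \div(v\mu)$, the relevant pairing is
\[
\langle \div(v\mu), u_n \rangle \,=\, -\int v \cdot \nabla u_n \, d\mu,
\]
and both directions are mediated by weak-$*$ limits of $\nabla u_n$ in $(L^\infty_\mu)^N$, whose role as producers of elements of $\N_\mu$ is built into the definition.

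For the ``vice versa'' direction, assuming $v(x) \in \T_\mu(x)$ $\mu$-a.e., I would take a test sequence $u_n \to 0$ in $\C(\Omega)$ with $\|\nabla u_n\|_\infty$ bounded. A routine smoothing combined with a diagonal extraction allows one to replace $u_n$ by smooth functions while preserving the uniform decay and the Lipschitz bound. Passing to a weak-$*$ cluster point $\nabla u_n \rightharpoonup \xi$ in $(L^\infty_\mu)^N$, the definition gives $\xi \in \N_\mu$; hence $\xi(x) \in \N_\mu(x)$ $\mu$-a.e., and by the pointwise hypothesis $v(x) \cdot \xi(x) = 0$ $\mu$-a.e. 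Since $v \in (L^1_\mu)^N$, the weak-$*$ convergence forces $\int v \cdot \nabla u_n \, d\mu \to 0$ along the subsequence, and the uniqueness-of-cluster-points argument promotes this to the full sequence. Proposition \ref{character2} then delivers $\div(v\mu) \in \lips$.

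For the converse, suppose $\div(v\mu) \in \lips$ and pick any $\xi \in \N_\mu$ with defining smooth sequence $\varphi_n \to 0$ (which, after truncation if needed, may be taken with a uniform Lipschitz bound). Proposition \ref{character2} now gives $\int v \cdot \nabla \varphi_n \, d\mu = -\langle \div(v\mu), \varphi_n \rangle \to 0$, so $\int v \cdot \xi \, d\mu = 0$ for every $\xi \in \N_\mu$. To localize, I would first verify that $\N_\mu$ is a module over Lipschitz functions, i.e.\ $\phi \xi \in \N_\mu$ for any Lipschitz $\phi$ and $\xi \in \N_\mu$; this follows from the product rule $\nabla(\phi \varphi_n) = \phi \nabla \varphi_n + \varphi_n \nabla \phi$ together with $\varphi_n \to 0$ uniformly. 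Testing with $\phi\xi$ in place of $\xi$ yields $\int \phi\,(v \cdot \xi)\, d\mu = 0$ for every Lipschitz $\phi$, hence $v \cdot \xi = 0$ $\mu$-a.e. A measurable selection of a countable family $\{\xi_k\} \subset \N_\mu$ that spans $\N_\mu(x)$ for $\mu$-a.e.\ $x$ then yields $v(x) \in \N_\mu(x)^\perp = \T_\mu(x)$ $\mu$-a.e.

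The main obstacle is this localization step in the converse: turning the global orthogonality $\int v \cdot \xi \, d\mu = 0$ into the pointwise statement $v(x) \in \N_\mu(x)^\perp$. The lynchpin is the Lipschitz-module property of $\N_\mu$; once that small technical lemma is in hand, the remaining measurable-selection argument is painless thanks to the finite-dimensional ambient space $\R^N$.
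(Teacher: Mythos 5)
Your argument for the forward implication is essentially the paper's: pair $\div(v\mu)$ with the defining smooth sequence of an arbitrary normal field $\xi\in\N_\mu$, use Proposition \ref{character2} to conclude $\int\xi\cdot v\,d\mu=0$, and then localize; the localization you spell out via the Lipschitz-module property of $\N_\mu$ and a countable spanning family is precisely what the paper absorbs into its stated locality of $\N_\mu$ and $\T_\mu$, so this is the same route with one implicit step made explicit. The converse implication, which the paper does not prove but delegates to \cite{B-C-J}, is handled correctly by your weak$*$ cluster-point and subsequence argument (noting only that the membership criterion for $\lips$ need be checked on smooth test functions, so your smoothing step is harmless), and the proposal is sound.
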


\begin{proof} We prove only the first of the two implications, for the
  complete proof we refer to \cite{B-C-J}. If $\xi$ is a normal vector
  field to $\mu$, that is $\xi(x)\in\N_\mu(x)$ for $\mu$-a.e. $x$, we
  have, denoting by $\{u_n\}$ the sequence corresponding to $\xi$
  $$\int\xi\cdot v\,d\mu=\lim_{n\to\infty}\int\nabla
  u_n\cdot v\,d\mu= \lim_{n\to\infty}-\langle
  u_n,\div(v \mu)\rangle.$$
  This last limit vanishes because
  $\div(v \mu)\in\lips$ by hypothesis, and $u_n\to0$ uniformly
  with $\|\nabla u_n\|_{L^\infty(\Omega)}$ bounded (see Proposition
  \ref{character2}). Therefore $v(x)\in\T_\mu(x)$ for $\mu$-a.e.
  $x$.
\end{proof}

The analysis performed in \cite{B-B} can be made also in the more
general case of $f \in\lips$; it is enough to repeat step by step what
done in \cite{B-B}: we obtain then the Monge-Kantorovich PDE
\begin{equation}\label{MK}
  \left\lbrace 
    \begin{array}{ll}
      -\div(\mu D_\mu u)=f & \mbox{in}\ \D'\\
      u\in Lip_1(\Omega) & \\
      |D_\mu u|=1 & \mu-\mbox{a.e.}
    \end{array}
  \right.
\end{equation}
In the system above the measure $\mu$ is called transport density and
plays a role in the transport problem and in some of its applications.

\begin{remark}
  The same conclusion holds in the more general framework of
  elasticity considered in \cite{B-B} in which the function $u$ is
  vector valued, a Dirichlet region $\Sigma$ is present, the bulk
  energy $\frac{1}{2}|\nabla u|^2$ is replaced by a convex
  $p$-homogeneous function $j(\nabla^{sym}u)$. For $f\in
  Lip_{1,\rho}(\Omega,\R^n)^\#$ the Monge-Kantorovich PDE (\ref{MK}) then takes
  the form
  \begin{equation}
    \left\lbrace 
      \begin{array}{ll}
        -\div(\sigma\mu)=f & \mbox{in}\ \D'(\Omega\setminus\Sigma)\\
        \sigma\in\partial j_\mu(x,e_\mu(u)) & \mu-\mbox{a.e. on }\ \Omega\\
        u\in Lip_{1,\rho}(\Omega,\Sigma) & \\
        j_\mu(x,e_\mu(u))=\frac{1}{p} & \mu-\mbox{a.e.}\\
        \mu(\Sigma)=0
      \end{array}
    \right.
  \end{equation}
  where we refer to \cite{B-B} for the precise meaning of
  $j_\mu,e_\mu,Lip_{1,\rho}$. The analogy with the quoted results
  remains also in the scalar case where the transportation problem can
  be written for $f\in\lips$. Note that in this case we cannot
  decompose $f$ into $f^+-f^-$ because $f$ is not in general a
  measure.
\end{remark}

\subsection{Duality, transport plan and transport densities}
In order to construct the analogous of  an optimal transport measure in the
general case of a distribution $f\in \xn$, we need to extend the Monge-Kantorovich 
duality and also to find a suitable
generalization of the concept of transport plan. To that aim we now present
a construction which will be shown to encompass 
the classical theory.

To each $\varphi\in\C^1_0(\R^N)$ we associate the function
$$D_\varphi(x,v,t):=\left\{\begin{array}{ll}
    \frac{\varphi(x+tv)-\varphi(x)}{t} & \mbox{if}\ t\neq0\\
    D\varphi(x)\cdot v & \mbox{if}\ t =0,
  \end{array}\right.$$
which belongs to $\C_0(\Omega\times
S^{N-1}\times[0,\infty))$. Then we can seek for a positive measure
$\sigma\in\M(\Omega\times S^{N-1}\times[0,\infty))$ which
minimizes the total variation among all the positive measures $\sigma$
such that
\begin{equation}\label{proietto}
  \int_{\Omega\times S^{N-1}\times[0,\infty)}D_\varphi(x,v,t)\,d\sigma
  =\langle f,\varphi\rangle\qquad\forall\varphi\in\C^1_0 (\R^N). 
\end{equation}
Then the new variational problem that we will consider reads as:
\begin{equation}\label{dualmin}
  \min\{\|\sigma\|\ :\ \sigma\in\M^+(\Omega\times S^{N-1}\times[0,\infty)),\ 
\sigma\ \mbox{satisfies }(\ref{proietto})\}.
\end{equation}

Before getting into the proof of existence in the general case of
$f\in \xn$ let us compare problem (\ref{dualmin}) with the
classical Monge-Kantorovich problem so that we can understand the
meaning of an optimal $\sigma$ in the classical case.  We will see
that $\sigma$ contains both optimal transport plans and optimal
transport densities.

\subsection{Comparisons with the classical case}
Let $f^+$ and $f^-$ be two finite and positive measures in $\Omega$ of
equal total mass and let $f=f^+-f^-$. Consider the map
$p:\Omega\times\Omega\to\Omega\times
S^{N-1}\times[0,\infty)$ defined by $$p(x,y):=\left\{\begin{array}{ll}
    (x,\frac{x-y}{|x-y|},|x-y|)& \mbox{if}\ x\neq y\\
    (x,e_1,0)& \mbox{if}\ x =y,
  \end{array}\right.$$
where the choice of $e_1$ is arbitrary and it is not relevant for what
follows.

\begin{prop}
  Let $f$ be a measure as above and let $\gamma$ be a transport plan
  for $f$; then the measure
  $p_\sharp(|x-y|\gamma)\in\M^+(\Omega\times S^{N-1}\times
  [0,\infty))$ satisfies the property (\ref{proietto}). Moreover if
  $\gamma$ is an optimal transport plan then $p_\sharp(|x-y|\gamma)$
  is optimal for problem (\ref{dualmin}). Therefore
  $$\W^1(f)=\inf_{\stackrel{\sigma\in\M^+(\Omega\times
      S^{N-1}\times[0,\infty))}{\pi_\sharp\sigma=f}}\|\sigma\|.$$
\end{prop}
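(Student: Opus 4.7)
The plan is to establish the three assertions in sequence, relying on the pushforward change-of-variables formula, the total-mass identity $\|p_\sharp(|x-y|\gamma)\|=\int|x-y|\,d\gamma$, and the Kantorovich--Rubinstein duality expressed through Definition \ref{Was1}.

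First I would verify that $\sigma_\gamma:=p_\sharp(|x-y|\gamma)$ satisfies the constraint (\ref{proietto}). For any $\varphi\in\C^1_0(\R^N)$, the pushforward formula gives
$$\int D_\varphi(x,v,t)\,d\sigma_\gamma=\int_{\Omega\times\Omega}|x-y|\,D_\varphi(p(x,y))\,d\gamma(x,y),$$
and on $\{x\neq y\}$ the product $|x-y|\,D_\varphi(p(x,y))$ collapses to the telescopic difference $\varphi(x)-\varphi(y)$ by the very definition of $D_\varphi$; on the diagonal the prefactor $|x-y|$ vanishes, so the conventional choice of $e_1$ in the definition of $p$ is harmless. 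The marginal conditions $\pi^1_\sharp\gamma=f^+$ and $\pi^2_\sharp\gamma=f^-$ then yield $\langle f,\varphi\rangle$, as required.

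Next I would compute the total mass: $\|\sigma_\gamma\|=\int|x-y|\,d\gamma$, which for an optimal $\gamma$ equals $\W^1(f)$. This already gives one inequality, $\inf\{\|\sigma\|:\sigma\text{ feasible}\}\le\W^1(f)$. For the reverse inequality, which simultaneously proves the optimality of $\sigma_\gamma$ and the final identity, I would test against Kantorovich potentials: given any $\varphi\in\C^1_0(\R^N)$ with $\|\nabla\varphi\|_\infty\le 1$ and any feasible $\sigma$, constraint (\ref{proietto}) combined with the pointwise bound $|D_\varphi(x,v,t)|\le 1$ gives
$$\langle f,\varphi\rangle=\int D_\varphi\,d\sigma\le\|\sigma\|.$$
The pointwise bound follows for $t>0$ from the representation $D_\varphi(x,v,t)=\frac{1}{t}\int_0^t\nabla\varphi(x+sv)\cdot v\,ds$ together with $|v|=1$, and for $t=0$ directly from the definition. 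Taking the supremum in $\varphi$ and applying Definition \ref{Was1} delivers $\W^1(f)\le\|\sigma\|$, closing the argument.

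The main conceptual point to watch is that $\sigma$ lives in the larger space $\M^+(\Omega\times S^{N-1}\times[0,\infty))$ rather than $\M^+(\Omega^2)$, so a priori one might fear that this relaxation admits smaller-mass candidates than classical transport plans; the pointwise estimate $|D_\varphi|\le 1$ on admissible potentials is precisely what rules this out and saves the duality. The remaining technicalities, namely the treatment of the diagonal $\{x=y\}$ in the pushforward and the consistency of the sign conventions linking $p$, $D_\varphi$ and (\ref{proietto}), are essentially bookkeeping once the dual estimate is in hand.
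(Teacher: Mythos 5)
Your proposal is correct and follows essentially the same route as the paper: feasibility of $p_\sharp(|x-y|\gamma)$ via the pushforward formula and the telescoping identity $|x-y|\,D_\varphi(p(x,y))=\varphi(x)-\varphi(y)$, the mass identity $\|p_\sharp(|x-y|\gamma)\|=\int|x-y|\,d\gamma$, and the lower bound $\|\sigma\|\ge\langle f,\varphi\rangle$ for every feasible $\sigma$ obtained from the pointwise estimate $|D_\varphi|\le1$ together with classical Kantorovich duality. No substantive differences to report.
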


\begin{proof}
  Let $\varphi\in\C^1_0(\R^N)$ then
  \begin{equation}\label{proxima}
    \begin{array}{ll}
      \ds\int_{\Omega\times S^{N-1}\times[0,\infty)} D_\varphi(x,v,t)\,dp_\sharp(|x-y|\gamma)&=\ds\int_{\Omega\times\Omega} D_\varphi(p(x,y))|x-y|\,d\gamma\\
      &=\ds\int_{\Omega\times\Omega}
      \varphi(x)-\varphi(y)\,d\gamma=\langle f,\varphi\rangle
    \end{array}
  \end{equation}
  which shows the first part of the claim.

  For the minimality: given a measure $\sigma$ which satisfies
  (\ref{proietto}) we have the inequality
  \begin{equation}\label{proxima2}
    \begin{array}{ll}
      \|\sigma\|&\ds\ge\sup_{\varphi\in\C^1_0(\R^N)\cap Lip_1(\Omega)}\int_{\Omega\times S^{N-1}\times[0,\infty)} D_\varphi(x,v,t)\,d\sigma\\
      &=\ds\sup_{\varphi\in\C^1_0(\R^N)\cap Lip_1(\Omega)}\langle f,\varphi\rangle=\int_{\clomega}|x-y|\,d\gamma.
    \end{array}
  \end{equation}
  On the other hand equation (\ref{proxima}) implies that
  \begin{equation}\label{proxima3}
    \begin{array}{ll}
      \ds\int_{\clomega}|x-y|\,d\gamma&=\ds\sup_{\varphi\in\C^1_0(\R^N)\cap Lip_1(\Omega)}\int_{\Omega\times S^{N-1}\times[0,\infty)} D_\varphi(x,v,t)\,dp_\sharp(|x-y|\gamma)\\
      &\le\ds\sup_{\psi\in\C_b(\clomega\times S^{N-1}\times[0,\infty)}\int_{\Omega\times S^{N-1}\times[0,\infty)}\psi(x,v,t)\,dp_\sharp(|x-y|\gamma)\\
      &\le\ds\int_{\clomega}|x-y|\,d\gamma
    \end{array}
  \end{equation}
  and the third term in the last inequality is the total variation of
  $p_\sharp(|x-y|\gamma)$.
\end{proof}

However, in the classical setting there is another way to build an
optimal $\sigma$. Indeed let $\nu$ be optimal for problem
(\ref{divcurr}) and consider the polar decomposition of $\nu$ as
$v|\nu|$ where $v$ is an unitary vector field. Define the map
$p_\nu:\spt\nu\to\clomega\times S^{N-1}\times[0,\infty)$ which to $x$
associates $(x,v(x),0)$.

\begin{prop} Let $f$ be a measure as above and let $\nu$ be optimal
  for problem (\ref{divcurr}).  Then the measure
  $(p_\nu)_\sharp\nu\in\M^+(\Omega\times
  S^{N-1}\times[0,\infty))$ is optimal for problem (\ref{dualmin}) and
  it is supported on $\Omega\times S^{N-1}\times \{0\}$.
\end{prop}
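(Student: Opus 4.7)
The plan is to take $\sigma := (p_\nu)_\sharp |\nu|$, reading the notation of the statement as follows: push the scalar total variation $|\nu|$ forward under $p_\nu$, with the polar vector $v(x)$ coming from $\nu = v|\nu|$ already absorbed into the direction slot of $p_\nu$. Since the third component of $p_\nu(x)$ is identically zero, one gets at once $\spt\sigma \subset \Omega \times S^{N-1} \times \{0\}$, and furthermore $\|\sigma\| = |\nu|(\spt\nu) = \|\nu\|$.

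Next I would verify the linear constraint (\ref{proietto}). For every $\varphi \in \C^1_0(\R^N)$, the change-of-variables formula combined with the value $D_\varphi(x, v, 0) = \nabla\varphi(x)\cdot v$ gives
$$\int_{\Omega\times S^{N-1}\times[0,\infty)} D_\varphi(x, w, t)\,d\sigma = \int_\Omega \nabla\varphi(x)\cdot v(x)\,d|\nu|(x) = \int \nabla\varphi\cdot d\nu.$$
Because $\nu$ solves $-\div\nu = f$ in $\D'(\R^N)$, the right-hand side equals $\langle f, \varphi\rangle$, so (\ref{proietto}) is satisfied.

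Optimality is then a one-line consequence of identities already in the paper. The preceding proposition shows that the value of problem (\ref{dualmin}) coincides with $\W^1(f)$; the duality (\ref{min2}) identifies $\W^1(f)$ with the infimum in (\ref{divcurr}), which by optimality of $\nu$ equals $\|\nu\|$. Since $\|\sigma\| = \|\nu\| = \W^1(f)$, the measure $\sigma$ is indeed optimal in (\ref{dualmin}).

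The only delicate point is conceptual rather than technical: one must interpret $(p_\nu)_\sharp \nu$ as the pushforward of the nonnegative mass $|\nu|$, with $p_\nu$ encoding the direction $v(x)$ internally, rather than as the pushforward of a vector-valued object. Once this interpretation is fixed, all three assertions (support, admissibility, and optimality) reduce to a change of variables combined with the identity $\W^1(f) = \min\|\lambda\|$ supplied by the earlier duality result, so I do not anticipate any substantive obstacle.
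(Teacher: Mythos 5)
Your proposal is correct and follows essentially the same route as the paper: interpret $(p_\nu)_\sharp\nu$ as the pushforward of the total variation $|\nu|$, verify (\ref{proietto}) via $D_\varphi(x,v,0)=\nabla\varphi(x)\cdot v$, note $\|\sigma\|=\|\nu\|$, and conclude optimality from the duality identity $\W^1(f)=\min\{\|\lambda\|:-\div\lambda=f\}$. The only cosmetic difference is that the paper re-derives the lower bound $\|\sigma'\|\ge\sup_{\varphi\in Lip_1(\Omega)}\langle f,\varphi\rangle$ for admissible $\sigma'$ inline, whereas you cite the preceding proposition for the value of problem (\ref{dualmin}); both yield the same conclusion.
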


\begin{proof} First let us show that $\sigma=(p_\nu)_\sharp\nu$
  satisfies (\ref{proietto}). For every $\varphi\in\C^1_0(\R^N)$
  $$\int_{\clomega\times S^{N-1}\times[0,\infty)} D_\varphi(x,v,t) \,d\sigma
  =\int_\Omega D\varphi\,d\nu=\langle\varphi,f\rangle.$$
  As in
  (\ref{proxima2}) above the fact that $\sigma$ satisfies
  (\ref{proietto}) already implies the inequality $$\sup_{\varphi\in
    Lip_1(\Omega)}\langle\varphi,f\rangle\le\|\sigma\|.$$
  On the other
  hand by definition of $\sigma$ we have $\|\sigma\|=\|\nu\|$ and by
  the optimality of $\nu$ we obtain $$\sup_{\varphi\in
    Lip_1(\Omega)}\langle\varphi,f\rangle=\|\nu\|.$$
\end{proof}

\subsection{Existence and structure of minimizers for problem
  (\ref{dualmin})}
Let us now prove the existence of an optimal $\sigma$ for general
$f\in \xn$.

\begin{theorem}\label{exist&dual}
  Let $f\in\lipp$. Then there exists an optimal measure $\sigma$ for
  the transportation problem (\ref{dualmin}). Moreover the minimal
  value for problem (\ref{dualmin}) coincides with the supremal value
  for problem (\ref{newflat}).
\end{theorem}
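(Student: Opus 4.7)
The plan is to derive both existence and duality in one stroke by a Fenchel-Rockafellar argument, after first compactifying the $t$-variable. Since every $\varphi\in\C^1_0(\R^N)$ has compact support, $D_\varphi(x,v,t)\to 0$ as $t\to\infty$, and so $D_\varphi$ extends continuously to the compact set $\bar K:=\Omega\times S^{N-1}\times[0,\infty]$ by $D_\varphi(x,v,\infty):=0$. Problem (\ref{dualmin}) can therefore be posed equivalently on $\sigma\in\M^+(\bar K)$, and since any mass placed at $\{t=\infty\}$ contributes nothing to (\ref{proietto}) while still counting in $\|\sigma\|$, any minimizer will automatically be supported in $K$.

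Next I would apply Fenchel-Rockafellar in the pairing $V:=\C^1_0(\R^N)$, $Y:=\C(\bar K)$, with continuous linear map $A\varphi:=D_\varphi$, and convex functionals $F(\varphi):=-\langle f,\varphi\rangle$ (finite and continuous thanks to $f\in\xn$) and $G(\psi):=\chi_{\{\psi\le 1\text{ on }\bar K\}}$. A direct computation yields $F^*(\varphi^*)=\chi_{\{\varphi^*=-f\}}$, $G^*(\sigma)=\|\sigma\|+\chi_{\{\sigma\ge 0\}}$, and $\langle A^*\sigma,\varphi\rangle=\int_{\bar K}D_\varphi\,d\sigma$, so that $A^*\sigma=f$ is precisely the constraint (\ref{proietto}). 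The Fenchel dual of $\inf_\varphi\{F(\varphi)+G(A\varphi)\}=-\sup\{\langle f,\varphi\rangle:\varphi\in\C^1_0(\R^N),\ D_\varphi\le 1\text{ on }\bar K\}$ is therefore $-\inf\{\|\sigma\|:\sigma\in\M^+(\bar K),\ \sigma\text{ satisfies }(\ref{proietto})\}$, i.e. the negative of (\ref{dualmin}). The qualification condition holds trivially at $\bar\varphi=0$: $F(\bar\varphi)=0$ and $G$ is continuous at $A\bar\varphi=0$, vanishing on the open ball $\{\|\psi\|_\infty<1\}$ of $\C(\bar K)$. Fenchel-Rockafellar thus produces at once no duality gap together with attainment of the maximum in the dual variable, which gives the existence of an optimal $\sigma$ for (\ref{dualmin}) and the identity $\min(\ref{dualmin})=\sup\{\langle f,\varphi\rangle:\varphi\in\C^1_0(\R^N),\ D_\varphi\le 1\text{ on }\bar K\}$.

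It then remains to identify this last supremum with the value in (\ref{newflat}). The constraint $D_\varphi\le 1$ on $\bar K$ combined with the convexity of $\Omega$ forces $\varphi|_\Omega$ to be $1$-Lipschitz (exchange the roles of $x$ and $x+tv$ when both lie in $\Omega$); conversely, any $1$-Lipschitz $u$ on $\Omega$ can be approximated by functions $\varphi_n\in\C^1_0(\R^N)$ built from $u$ via McShane extension, mollification and smooth truncation, with $D_{\varphi_n}\le 1$ up to a vanishing error and $\langle f,\varphi_n\rangle\to\langle f,u\rangle$ thanks to the Lipschitz-norm continuity of $f\in\xn$. The zero-average condition $\langle f,1\rangle=0$ then permits, as in the discussion following (\ref{newflat}), interchanging the seminorm control $\|\nabla u\|_\infty\le 1$ with the full Lipschitz control $\|u\|_{\lip}\le 1$, matching the primal value with the right-hand side of (\ref{newflat}). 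I expect the main obstacle to be precisely this extension/truncation step: one must manufacture test functions in $\C^1_0(\R^N)$ from Lipschitz data on $\Omega$ without spoiling the one-sided Lipschitz constraint along the half-lines issuing from $\Omega$. The Fenchel-Rockafellar machinery, by contrast, delivers existence and strong duality in a single stroke once the compactification is in place.
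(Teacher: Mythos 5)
Your argument is correct, and it reaches the same two conclusions (dual attainment and absence of a duality gap) by a genuinely different mechanism than the paper. The paper works directly with the linear functional $F(D_\varphi):=\langle f,\varphi\rangle$ on the subspace $\{D_\varphi\}\subset\C_b(\Omega\times S^{N-1}\times[0,\infty))$, extends it by Hahn--Banach preserving the norm, and invokes Riesz representation; the chain of norm equalities $\|\sigma\|=\|\tilde F\|=\|F\|$ then yields both optimality and the identification with the supremum in (\ref{newflat}). Your Fenchel--Rockafellar route, preceded by the one-point compactification in $t$, buys two things that the paper's write-up glosses over: first, on the compact set $\bar K$ the dual of $\C(\bar K)$ really is $\M(\bar K)$, whereas the dual of $\C_b$ of the non-compact product contains functionals not represented by Radon measures, so your compactification (with the observation that mass at $t=\infty$ is wasted) is exactly the repair needed to justify the representation step; second, the positivity constraint $\sigma\ge0$ appearing in (\ref{dualmin}) is encoded in $G^*$ and therefore comes out of the duality automatically, while a norm-preserving Hahn--Banach extension is a priori a signed measure and the paper never verifies that its $\sigma$ lies in $\M^+$. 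The price you pay is the same one the paper pays silently: identifying $\sup\{\langle f,\varphi\rangle:D_\varphi\le1\text{ on }\bar K\}$ with the value of (\ref{newflat}) requires passing from the one-sided constraint along all rays issuing from $\Omega$ to the Lipschitz constraint on $\Omega$ alone, via McShane extension, mollification and truncation together with the zero-average normalization; the paper compresses this into the unproved identity $\|\varphi\|_{\lip}=\|D_\varphi\|_\infty$, so your explicit flagging and sketch of this step is a point in your favour rather than a gap, though it should be written out in full (note that convexity of $\Omega$ is not actually needed for the symmetric-pair argument showing $\varphi|_\Omega\in Lip_1$).
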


\begin{proof}
  Again for each $\varphi\in\C^1_0(\R^N)$ we define $D_\varphi$ as
  above and we consider $Y:=\{D_\varphi\ :\ \varphi\in\C^1_0(\R^N)\}$
  equipped with the $L^\infty$ norm.

  Define the linear functional $F:Y\to\R$ by $F(D_\varphi):=\langle
  f,\varphi\rangle$ and notice that
  $$|F(D_\varphi)|\le\|f\|_{\lipp}\|\varphi\|_{\lip}=\|f\|_{\lipp}\|D_\varphi\|_\infty\qquad\forall
  D_\varphi\in Y.$$
  The Hahn-Banach theorem then provides an extension
  $\tilde F$ of $F$ to the space of bounded and continuous functions
  on $\Omega\times S^{N-1}\times[0,\infty)$ which preserves
  the norm of $F$, and such extension is represented by a measure
  $\sigma\in\M(\Omega\times S^{N-1}\times[0,\infty) )$
  which verifies
  \begin{enumerate}
  \item $F(D_\varphi)=\int_{\Omega\times
      S^{N-1}\times[0,\infty)}D_\varphi\,d\sigma$ for all $\varphi \in
    \C^1_0(\R^N)$;
  \item $\|\sigma\|=\|\tilde F\|=\|F\|$.
  \end{enumerate}
  In particular the first of the previous conditions implies that
  $\sigma$ satisfies (\ref{proietto}) and this gives $$\langle
  f,\varphi\rangle=\int_{\Omega\times
    S^{N-1}\times[0,\infty)}D_\varphi\,d\sigma\le\|\sigma\|\qquad\forall\varphi\in
  \C^1_0(\R^N).$$
  Then the equality $$\|\sigma\|=\|\tilde{F}\|=\|F\|$$
  implies both the equality
  \begin{equation}\label{eqnorm}
    \|\sigma\|=\sup\{\langle f,\varphi\rangle\ :\ \varphi\in
    \C^1_0(\R^N)\cap Lip_1(\Omega)\}
  \end{equation}
  and the minimality of $\sigma$.
\end{proof}

When $f$ is a measure, in \cite{B-B} it is shown that an optimal
transportation density can be obtained through an optimal plan
$\gamma$ considering the total variation of the measure $\nu$ defined
by the formula
\begin{equation}\label{formula}
  \langle\nu,\varphi\rangle=\int\Big(\int_{S_{x,y}}\varphi\,d\H^1\Big)\gamma(dx,dy)
\end{equation}
where $S_{x,y}$ denotes the segment joining $x$ to $y$ (a geodesic
line in the general case).  We show that the same can be done when
$f\in \xn$. Decompose an optimal $\sigma$ in the sum of two
parts: $$\sigma_0:=\sigma\res(\Omega\times
S^{N-1}\times\{0\}),\qquad\sigma_+=\sigma-\sigma_0,$$
and define the
map $\pi:\Omega\times
S^{N-1}\times(0,+\infty)\to\Omega\times\R^N$ as
$$\pi(x,v,t)=(x,x+tv).$$
Using a notation which is reminiscent of
transport plans we define $\gamma_+:=\pi_\sharp\sigma_+,$ and then in
correspondence with $\sigma_+$ we consider the measures $\nu_+$
defined by:
\begin{equation}\label{nuerre}
  \langle\psi,\nu_+\rangle:=\int_{\clomega\times\clomega}
  \frac{1}{|x-y|}\int\frac{y-x}{|y-x|}\cdot\psi(\cdot)\,d\H^1\res[x,y]\,d\gamma_+
  (x,y). 
\end{equation}
To $\sigma_0$ instead we associate $\nu_0$ defined by
\begin{equation}\label{nu0}
  \langle\psi,\nu_0\rangle:=\int_{\Omega\times S^{N-1}}\psi(x)\cdot V\,d\sigma_0(x,V).
\end{equation}

\begin{theorem}\label{divergenze}
  Let $\nu_0$ and $\nu_+$ be defined by (\ref{nu0}) and
  (\ref{nuerre}). Then
  \begin{equation}
    -\div(\nu_0+\nu_+)=f.
  \end{equation}
  Moreover if $\sigma$ is optimal then $\nu=\nu_0+\nu_+$ is also
  optimal for (\ref{divcurr}).
\end{theorem}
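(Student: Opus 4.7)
My plan is to verify the divergence equation $-\div\nu = f$ as distributions on $\R^N$ and then to derive optimality via a short chain of inequalities that closes thanks to Theorem \ref{exist&dual}.

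For the divergence equation, I test against an arbitrary $\varphi \in \C^1_0(\R^N)$ in the admissibility relation (\ref{proietto}) after splitting $\sigma = \sigma_0 + \sigma_+$. On $\{t=0\}$ the integrand reduces to $D_\varphi(x,v,0) = \nabla\varphi(x) \cdot v$, so the definition (\ref{nu0}) applied with $\psi = \nabla\varphi$ gives $\int D_\varphi\,d\sigma_0 = \langle\nabla\varphi,\nu_0\rangle$ immediately. On $\{t>0\}$ I push $\sigma_+$ forward by $\pi$ to rewrite
\[
\int D_\varphi\,d\sigma_+ \;=\; \int_{\clomega\times\clomega}\frac{\varphi(y)-\varphi(x)}{|y-x|}\,d\gamma_+(x,y).
\]
The fundamental theorem of calculus, parametrising the segment $[x,y]$ by arclength, rewrites $\varphi(y)-\varphi(x)$ as $\int_{[x,y]} \nabla\varphi(z)\cdot \frac{y-x}{|y-x|}\,d\H^1(z)$; matching with (\ref{nuerre}) yields $\int D_\varphi\,d\sigma_+ = \langle\nabla\varphi,\nu_+\rangle$. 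Adding the two contributions gives $\langle f,\varphi\rangle = \langle\nabla\varphi,\nu_0+\nu_+\rangle$, which is exactly $-\div(\nu_0+\nu_+) = f$ in $\D'$.

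For the optimality assertion I use the chain
\[
\|\nu\| \;\le\; \|\nu_0\|+\|\nu_+\| \;\le\; \|\sigma_0\|+\|\sigma_+\| \;=\; \|\sigma\| \;=\; \W^1(f) \;\le\; \inf\{\|\lambda\| : -\div\lambda = f\} \;\le\; \|\nu\|,
\]
which forces all entries to be equal, so that $\nu$ realises the minimum of (\ref{divcurr}). The fourth step is Theorem \ref{exist&dual}, the fifth is the easy side of duality (pair an admissible $\lambda$ against a $1$-Lipschitz test function and integrate by parts), and the last is the admissibility of $\nu$ just proved.

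The only non-formal step is $\|\nu_0\|+\|\nu_+\| \le \|\sigma_0\|+\|\sigma_+\|$. Testing the defining formulas (\ref{nu0}) and (\ref{nuerre}) against continuous vector fields $\psi$ with $|\psi|\le 1$, the bound for $\nu_0$ follows at once from $|\psi(x)\cdot V|\le 1$ since $V\in S^{N-1}$. For $\nu_+$ the inner integrand is pointwise bounded by $1$, the $\H^1$-length of $[x,y]$ is $|y-x|$, and this exactly cancels the $1/|y-x|$ prefactor, leaving $\|\nu_+\|\le \gamma_+(\clomega\times\clomega)=\|\sigma_+\|$ (using that $\pi_\sharp$ preserves the total mass of positive measures). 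This cancellation, which makes the apparently singular weight $1/|y-x|$ harmless, is the main technical point to watch.
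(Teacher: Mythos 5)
Your proposal is correct and follows essentially the same route as the paper: the divergence identity is verified by testing against $\nabla\varphi$, reducing the $\sigma_+$ contribution to $D_\varphi$ via the fundamental theorem of calculus along the segment $[x,y]$ (the $1/|x-y|$ weight cancelling against the $\H^1$-length), and the $\sigma_0$ contribution matching $D_\varphi(\cdot,\cdot,0)$ directly. The optimality argument is likewise the paper's: the estimate $\|\nu\|\le\|\sigma\|$ from the defining formulas combined with the easy duality bound $\|\lambda\|\ge\W^1(f)$ for any admissible $\lambda$ and the identity $\|\sigma\|=\W^1(f)$ from Theorem \ref{exist&dual}.
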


\begin{proof}
  For every $\varphi\in\C^1_0(\Omega)$ one has
  \begin{eqnarray*}
    \langle-\div(\nu_0+\nu_+),\varphi\rangle&=& \int_{\clomega\times\clomega}\frac{1}{|y-x|}\int _0^1\nabla(x+t(y-x))\cdot(y-x)\,dt\,d\tilde{\gamma}(x,y)\\
    & & +\int_{\clomega\times S^{N-1}}\nabla\varphi(x)\cdot V\,d\sigma_0(x,V,0)\\
    &=& \int_{\clomega\times\clomega}\frac{\varphi(y)-\varphi(x)}{|y-x|}\,d\sigma_+(x,y)\\
    & & +\int_{\clomega\times S^{N-1}}\nabla\varphi(x)\cdot V\,d\sigma_0 (x,V,0)\\
    &=& \int_{\clomega \times S^{N-1}\times[0,\infty)} D_\varphi(x,v,t)\,d\sigma=\langle f,\varphi\rangle.
  \end{eqnarray*}
  About the minimality first observe that directly from the formula above 
  one obtains an estimate on the total variation of $\nu$:
  \begin{equation}\label{stimaup}
    \|\nu\|\le\|\sigma\|.
  \end{equation}
  On the other hand
  \begin{equation}\label{numero}
    \begin{array}{ll}
      |\nu|&=\ds\sup_{\psi\in \C_0(\Omega),\,\|\psi\|_\infty\le1}\langle
      \nu,\psi\rangle\geq\sup_{\varphi\in \C^1(\Omega)\cap Lip_1}\langle
      \nu,\nabla\varphi\rangle\\
      &=\ds\sup_{\varphi\in \C^1(\Omega)\cap Lip_1}\langle f,\varphi\rangle=\|f\|_{\lipp}
    \end{array}
  \end{equation}
  and if $\sigma$ is optimal $\|f\|_{\lipp}=\|\sigma\| $ thus giving
  equality in (\ref{stimaup}) and the minimality of $\nu$.
\end{proof}

\begin{remark}\label{optimalnu} In particular Theorem \ref{divergenze} allows us to prove the
  formula $$\W^1(f)=\min\{\|\nu\|\ :\ \nu\in\M(\Omega,\R^N),\ -\div\nu
  =f\}.$$
\end{remark}

\section{A decomposition of $\xn$ and the distance to $\lips$}

We will now apply the theory constructed so far to   
give an ``orthogonal decomposition'' of $\xn$ and
to compute the distance of a distribution
$f\in\lipp$ to the space $\lips$ in terms of the problems introduced
in the previous sections. Let us recall that, as remarked in the
introduction, the space $\lips$ is a closed subspace of
$\xn$ and contains the weak Jacobians of maps in
certains Sobolev spaces.

Let $f\in\lipp$; then by Theorem \ref{divergenze} $f$ may be written
as $f=-\div\nu$ for a suitable vectorial measure. Recalling definition \ref{tangmeas} 
we can further decompose $\nu$ as
$$\nu=\nu_T+\nu_N$$
where the measure
$\nu_T\in \tang$ is a tangent measure and $\nu_N\in \norm$ is a normal measure. 
In other words we may write  $\nu_T=v_T |\nu|$ and $\nu_N=v_N |\nu|$ where $v_T (x)\in
{\mathcal T}_{|\nu|}(x)$  and $v_N(x)\in{\mathcal N}_{|\nu|}(x)$
for $|\nu|$-a.e. $x$.
We will use the following technical result:

\begin{lemma}\label{relaxandgo} Let $\alpha$ be a positive Radon measure in $\R^N$ 
and let $\eta \in (L^1_\alpha)^N $.
Then there exists a sequence $\{\varphi_n\} \subset \C^1_0 (\R^N)$  
such that: $\varphi_n \to 0$ uniformly,
  $|\nabla \varphi_n|\leq 1$ and 
$$ \lim_{n \to \infty } \int \nabla \varphi_n (x) \cdot \eta(x) d \alpha = \int |\eta_N
(x)| d \alpha,$$
where $\eta_N(x) \in (T_\alpha(x))^\bot$ denotes the normal component of $\eta(x)$.
\end{lemma}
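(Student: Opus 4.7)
The plan is to identify the natural candidate for the weak-$*$ limit of the gradients $\nabla\varphi_n$ inside $\N_\alpha$, and then upgrade the existence of an approximating sequence (granted by the very definition of $\N_\alpha$) so that the Lipschitz-1 constraint is enforced. First, I would decompose $\eta = \eta_T + \eta_N$ with $\eta_T(x) \in T_\alpha(x)$ and $\eta_N(x) \in T_\alpha(x)^\perp$ for $\alpha$-a.e.\ $x$, and observe the following upper bound: for any admissible sequence $\{\varphi_n\}\subset \C^1_0(\R^N)$ with $\varphi_n\to 0$ uniformly and $|\nabla\varphi_n|\le 1$, a subsequence satisfies $\nabla\varphi_n \rightharpoonup^* \xi$ in $(L^\infty_\alpha)^N$ with $|\xi|\le 1$, and $\xi \in \N_\alpha$ by the very definition of the normal space. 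Since $\xi$ is normal and $\eta_T$ is tangential we have $\xi\cdot\eta_T = 0$ $\alpha$-a.e., hence
\begin{equation*}
\lim_n \int \nabla\varphi_n\cdot\eta\,d\alpha \;=\; \int \xi\cdot\eta_N\,d\alpha \;\le\; \int |\eta_N|\,d\alpha.
\end{equation*}
So the value $\int|\eta_N|\,d\alpha$ is the ceiling for any admissible sequence; the point is to produce a sequence attaining it.

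Next, the natural candidate for $\xi$ is $v(x):=\eta_N(x)/|\eta_N(x)|$ on $\{|\eta_N|>0\}$ and $v(x):=0$ elsewhere. Then $v\in (L^\infty_\alpha)^N$, $|v|\le 1$, and $v(x)\in\N_\alpha(x)$ $\alpha$-a.e., because $\N_\alpha(x)$ is a linear subspace containing $\eta_N(x)$. The locality statement for $\N_\alpha$ (recorded right after its definition in the paper) then yields $v\in\N_\alpha$. By the definition of $\N_\alpha$ one gets smooth $u_n\to 0$ uniformly with $\nabla u_n \rightharpoonup^* v$ weak-$*$ in $(L^\infty_\alpha)^N$; pairing against $\eta\in (L^1_\alpha)^N$ then gives $\int \nabla u_n\cdot\eta\,d\alpha \to \int v\cdot\eta_N\,d\alpha = \int |\eta_N|\,d\alpha$.

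The main obstacle is that this construction does not ensure $|\nabla u_n|\le 1$, nor that $u_n\in \C^1_0(\R^N)$. To fix both, I would first multiply $u_n$ by a cut-off $\chi$ equal to $1$ on a neighborhood of $\Omega$ (assuming $\spt\alpha\Subset\R^N$, as is the case in this paper) so as to land in $\C^1_c(\R^N)$; this does not affect any of the integrals against $\alpha$. The Lipschitz-1 constraint is enforced via a truncation-plus-mollification. Concretely, I would pick $\delta_n\downarrow 0$ with $\|u_n\|_\infty/\delta_n\to 0$, replace $u_n$ by an inf/sup-convolution $\tilde u_n(x) := \inf_y\{u_n(y) + |x-y|\}\vee \sup_y\{u_n(y)-|x-y|\}$ (which is everywhere $1$-Lipschitz and coincides with $u_n$ on the set where $u_n$ is already $1$-Lipschitz), and then smooth by mollification at scale $\eta_n\ll \delta_n$ to obtain $\varphi_n\in\C^1_c$ with $|\nabla\varphi_n|\le 1$ and $\|\varphi_n\|_\infty\to 0$. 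One then has to verify that $\int \nabla\varphi_n\cdot\eta\,d\alpha$ still converges to the same limit $\int|\eta_N|\,d\alpha$; the key is that the set where the truncation actually modifies $u_n$ has mass that vanishes against $|\eta|\,d\alpha$ in the limit, because the defect $\nabla(\varphi_n - u_n)$ is weak-$*$ bounded and its limit is normal of norm $\le 1$, hence its contribution is controlled by the orthogonality against $\eta_T$ and by $|v|\le 1$.

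With such a sequence in hand, the conclusion is immediate: $\varphi_n\to 0$ uniformly, $|\nabla\varphi_n|\le 1$, $\varphi_n\in\C^1_0(\R^N)$, and
\begin{equation*}
\lim_n\int \nabla\varphi_n(x)\cdot\eta(x)\,d\alpha \;=\; \int v\cdot\eta_N\,d\alpha \;=\; \int |\eta_N(x)|\,d\alpha,
\end{equation*}
which matches the upper bound and proves the lemma. The crux of the argument, and the only delicate step, is the Lipschitz-1 truncation of the approximating sequence while retaining its weak-$*$ behaviour in the pairing against $\eta$; everything else is a direct use of the orthogonality between $\T_\alpha$ and $\N_\alpha$ and of the locality of $\N_\alpha$.
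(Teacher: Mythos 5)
Your upper bound and the identification of the candidate limit $v=\eta_N/|\eta_N|\in\N_\alpha$ are fine, but the proof has a genuine gap exactly where you locate the ``only delicate step'': the passage from the sequence $u_n$ furnished by the definition of $\N_\alpha$ (which is merely \emph{bounded} in $(L^\infty_\alpha)^N$, say by some $C\gg 1$) to a sequence with $|\nabla\varphi_n|\le 1$. Weak-$*$ convergence $\nabla u_n\rightharpoonup^* v$ with $|v|\le 1$ gives no control on the $\alpha$-measure of the set $\{|\nabla u_n|>1\}$: the gradients can oscillate between values of norm $C$ and still average out to $v$, so the region where your inf/sup-convolution actually modifies $u_n$ need not be small against $|\eta|\,d\alpha$. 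After truncation, a subsequence of $\nabla\varphi_n$ converges weak-$*$ to some $\xi\in\N_\alpha$ with $|\xi|\le 1$, but nothing forces $\xi=v$; your closing argument (``its limit is normal of norm $\le 1$, hence controlled by orthogonality'') only reproduces the upper bound $\int\xi\cdot\eta_N\,d\alpha\le\int|\eta_N|\,d\alpha$, not the attainment. In fact the statement you would need --- that every $v\in\N_\alpha$ with $|v|\le 1$ $\alpha$-a.e.\ is the weak-$*$ limit of gradients of functions vanishing uniformly \emph{and} satisfying the pointwise constraint $|\nabla\varphi_n|\le 1$ --- is essentially equivalent to the lemma itself, so the argument is circular at its core.

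The paper avoids this issue by a different route: it encodes the constraint $|z|\le 1$ into the integrand $j(x,z)=\eta(x)\cdot z+\chi_{\{|z|\le 1\}}$, considers $F(\varphi)=\int j(x,\nabla\varphi)\,d\alpha$ on $\C^1_0(\R^N)$, and identifies the relaxed value $\overline F(0)=F^{**}(0)$ by convex duality (conjugating through the divergence operator, using the Bouchitt\'e--Valadier representation of $J^*$ on measures, and invoking Proposition \ref{divtang} and Lemma \ref{submeas} for the lower bound on $F^*$). The recovery sequence produced by relaxation then satisfies $|\nabla\psi_n|\le 1$ automatically, which is precisely the point your direct construction cannot secure. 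To repair your proof you would need a Lipschitz-truncation result adapted to an arbitrary (possibly singular) Radon measure $\alpha$, which is not available off the shelf; as written, the argument does not close.
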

\begin{proof} Consider the integrand
$$j (x,z)= \eta(x) \cdot z +\chi_{\{|z| \leq 1\}}, $$
and the functional
\begin{equation}
F(\varphi)= \left\{\begin{array}{ll}
\int j(x, \nabla \varphi) d \alpha & \mbox{if} \ \varphi \in \C^1_0 (\R^N), \\
+\infty & \mbox{otherwise}.
\end{array}
\right.
\end{equation}
Denote by $\overline{F}: \C_0 (\R^N) \to \R$ the relaxed functional of
$F$ with respect to the uniform convergence, then we claim that 
\begin{equation}\label{relaxineq}
\overline{F}(0) = - \int |\eta_N(x)| d \alpha.
\end{equation}
By definition of relaxed functional (\ref{relaxineq}) implies that
there exists a sequence $\{\psi_n\} \subset \C^1_0 (\R^N)$  such that: $\psi_n \to 0$ uniformly,
  $|\nabla \psi_n|\leq 1$ and $\lim_{n \to \infty}  \int \nabla \psi_n
  (x) \cdot \eta(x) d \alpha = - \int |\eta_N(x)| d \alpha$. Then
  it is enough to consider $\varphi_n:=-\psi_n$ to obtain the
  conclusion of the lemma. 
Let us then prove  (\ref{relaxineq}). By convexity 
$ \overline{F}(0)= F^{**} (0)$ and by definition  $F^{**} (0) =\sup_{g \in \M (\Omega)}
{-F^* (g)}=- \inf_{g \in \M (\Omega)} F^* (g)$. 
We compute now $F^*$ .We notice that $F= J\circ A$ where $J$ denotes the integral 
functional $J: p\in \C_0(\R^N;\R^N) \mapsto \int j(x,p) \, d \alpha$
and $A: u\in \C^1_0(\R^N) \mapsto \nabla u\in \C_0(\R^N;\R^N)$.
As $J$ is convex continuous at $p=0$, by a classical duality result 
(see for instance \cite{ency}), we have 
$$F^* (g)=\inf \{J^*(\sigma)\ | \ -\div \ \sigma =g\},  $$
where $J^*$ is the Fenchel conjugate of $J$ on the dual space $\M(\R^N;\R^N)$.
A simple computation shows that $j^*(x,w)=|w-\eta(x)|$ and
by applying \cite{Bou-Val}, we have
$$J^*(\sigma)= \int j^* (\frac{d \sigma}{d \alpha})d \alpha + \int h(x,\sigma_s)$$
where $\sigma_s$ represent the singular part of $\sigma$ with respect to $\alpha$ and
$$h(x,z)=\sup \{\psi(x)\cdot z \ | \ \int j (x,\psi(x))d \alpha < \infty, \ \psi\in   \C_0(\R^N;\R^N)  \ ,\ |\psi|\leq 1 \ \} \ =\ |z|. $$
Therefore if we decompose all measures $\sigma$ such that $-\div \sigma=g$ in 
its absolutely continuous  and singular parts with respect to $\alpha$ so that $\sigma= w \alpha + \sigma _s$,
we can write
$$F^* (g)=\inf \{\int_{\R^N} |w-\eta|d \alpha+\int_{spt \alpha}|\sigma_s |\ | \ 
-\div (w \alpha+\sigma_s) =g\}. $$
Let us choose $w=\eta_T$, $\sigma_s=0$. Then $\overline{g}=-\div (\eta_T \alpha)$
and we get
$$ \inf {F^*(g)} \leq F^*(\overline{g})= \int |\eta_N| d \alpha, $$
and this prove the first inequality of (\ref{relaxineq}).
To prove the opposite inequality for a given $g= -\div (w \alpha + \sigma _s)$
define  $m=\alpha +\sigma_s$ and set
\begin{equation*}
q(x)= \left\{ 
\begin{array}{ll}
w(x) & \alpha-a.e.\\
\frac{d \sigma_s}{d |\sigma_s|} & \sigma_s-a.e..
\end{array}
\right.
\end{equation*}
Since $g=-\div\ (q(x) m)$ is a measure, by  Proposition \ref{divtang}, there holds $q(x) \in
T_m(x)$ for $m$-a.e. $x$ and then by Lemma \ref{submeas}\  $w \in T_\alpha(x)$
for $\alpha$-a.e. $x$. Thus
$$ \int_{\R^N} |w-\eta|d \alpha+\int_{spt \alpha}|\sigma_s |=
\int_{\R^N}( |w-\eta_T|+|\eta_N|)d \alpha+\int_{spt \alpha}|\sigma_s |
\geq  \int_{\R^N}|\eta_N|d \alpha. $$
It follows that $\inf F^* \ge \int_{\R^N}|\eta_N|d \alpha $ and we are led
to the equality in  (\ref{relaxineq}). 

\end{proof}

We are now in position to state the  main theorem of this section.

\begin{theorem}\label{projector} For every $f\in\lipp$, there holds 
$$\W^1(f,\lips)=\min\left\{\int|\nu_N|\ :\ \nu\in\M(\Omega,\R^N),\
  -\div\nu=f\right\}.$$
Moreover there exists a unique decomposition \
$ f=f_T+f_N $
with $f_T \in \x0s$ and $f_N= \div \ \beta$ for some normal measure $\beta\in \norm $. We have in addition 
$$ \W^1 (f)= \W^1(  f_T)+ \W^1 (f_N).$$
\end{theorem}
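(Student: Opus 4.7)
The plan is to prove the three claims in order: the distance formula, the existence and uniqueness of the decomposition, and the additivity of the Wasserstein seminorms. The key tool throughout is Lemma \ref{relaxandgo}, which produces test functions uniformly small in $\C(\Omega)$ yet of unit Lipschitz constant, realising in the limit the mass of the normal component of a prescribed vector measure.

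\emph{Step 1 (distance formula).} Given any $\nu\in\M(\Omega,\R^N)$ with $-\div\nu=f$, decompose $\nu=\nu_T+\nu_N$ per Definition \ref{tangmeas}. Proposition \ref{divtang} yields $g:=-\div\nu_T\in\lips$, and since $f-g=-\div\nu_N$, Remark \ref{optimalnu} gives $\W^1(f-g)\le\|\nu_N\|$, hence $\W^1(f,\lips)\le\|\nu_N\|$. Conversely, write $\nu=v\,|\nu|$ and apply Lemma \ref{relaxandgo} with $\alpha=|\nu|$ and $\eta=v$: one obtains $\varphi_n\in\C^1_0(\R^N)$ with $\varphi_n\to 0$ uniformly, $|\nabla\varphi_n|\le 1$, and $\langle f,\varphi_n\rangle=\int\nabla\varphi_n\cdot v\,d|\nu|\to\|\nu_N\|$. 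For any $g\in\lips$, the splitting $\langle f,\varphi_n\rangle=\langle f-g,\varphi_n\rangle+\langle g,\varphi_n\rangle$ bounds the first summand by $\W^1(f-g)$ (definition of the Wasserstein norm) and forces the second to $0$ by Proposition \ref{character2}; passing to the limit and then to the infimum over $g$ gives $\|\nu_N\|\le\W^1(f,\lips)$. The two inequalities force $\|\nu_N\|$ to be \emph{constant} over all admissible $\nu$, so the minimum is attained and equals $\W^1(f,\lips)$.

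\emph{Step 2 (existence and uniqueness of $f=f_T+f_N$).} For existence, pick any $\alpha$ with $-\div\alpha=f$ (Remark \ref{optimalnu}), decompose $\alpha=\alpha_T+\alpha_N$, and set $f_T:=-\div\alpha_T\in\lips$ (Proposition \ref{divtang}) and $f_N:=\div(-\alpha_N)$, with $-\alpha_N\in\norm$ by construction. For uniqueness, Step 1 applied to any $h\in\lips$ gives $\W^1(h,\lips)=0$, whence the subspace intersection $\lips\cap\{-\div\gamma:\gamma\in\norm\}=\{0\}$. Given two decompositions $f=f_T+f_N=f'_T+f'_N$, the distribution $h:=f_T-f'_T=-\div(\beta'-\beta)$ lies in $\lips$; splitting $\beta'-\beta$ into its tangential and normal parts and moving the tangential-divergence contribution (which is itself in $\lips$ by Proposition \ref{divtang}) to the $\lips$-side of the identity places $h$ into the trivial intersection above, forcing $h=0$.

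\emph{Step 3 (additivity $\W^1(f)=\W^1(f_T)+\W^1(f_N)$).} The inequality $\le$ is the triangle inequality for the Wasserstein seminorm. The reverse direction is the main obstacle of the theorem. The strategy is to exhibit an optimal $\alpha=v\,|\alpha|$ for $\W^1(f)$, with $v=v_T+v_N$ the tangential/normal splitting of $v$, whose Euclidean-orthogonal components additionally satisfy the disjoint-support condition $|v_T(x)|\cdot|v_N(x)|=0$ for $|\alpha|$-a.e.\ $x$; under this condition $\|\alpha\|=\|\alpha_T\|+\|\alpha_N\|$, and combining $\|\alpha_N\|=\W^1(f_N)$ (from Step 1) with $\|\alpha_T\|\ge\W^1(f_T)$ (as a competitor) yields $\W^1(f)=\|\alpha\|\ge\W^1(f_T)+\W^1(f_N)$. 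Such an $\alpha$ is produced as $\alpha_T^*+\beta^*$, where $\alpha_T^*$ is an optimal (automatically tangential, by Proposition \ref{divtang}) measure for $\W^1(f_T)$ and $\beta^*$ is an optimal normal measure for $\W^1(f_N)$ supplied by Step 1; optimality of the sum for $\W^1(f)$ is verified by pairing it against a joint quasi-optimal test function combining a Kantorovich potential of $f_T$ with the $\C^1_0$ sequence from Lemma \ref{relaxandgo} applied to $\beta^*$.
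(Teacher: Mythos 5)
Your Steps 1 and 2 are correct and essentially reproduce the paper's argument: the upper bound $\W^1(f,\lips)\le\|\nu_N\|$ comes from the tangential competitor $g=-\div\nu_T\in\lips$ (Proposition \ref{divtang}), and the lower bound from Lemma \ref{relaxandgo} together with Proposition \ref{character2}. The paper packages the lower bound as an $\inf$--$\sup$ over $G\in\tang$ and applies the lemma to $\nu_N+G$, while your splitting $\langle f,\varphi_n\rangle=\langle f-g,\varphi_n\rangle+\langle g,\varphi_n\rangle$ is an equivalent, arguably cleaner, route to the same conclusion; the decomposition and its uniqueness are as in the paper. (Minor point on uniqueness: after you kill $\div\rho_N$ you are left with $h=\div\rho_T$, which only restates $h\in\lips$; what actually forces $h=0$ is that $\norm$ is a linear subspace, so that $\beta'-\beta$ is already normal and $\rho_T=0$.)

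Step 3 is where the genuine gap lies --- and note that the paper's own proof is silent there: it ends with the uniqueness remark and never argues the identity $\W^1(f)=\W^1(f_T)+\W^1(f_N)$. Your argument hinges on producing an optimal $\alpha=v\,|\alpha|$ with $|v_T(x)|\,|v_N(x)|=0$ for $|\alpha|$-a.e.\ $x$, but nothing you write yields one. The candidate $\alpha_T^*+\beta^*$ does not work: $\alpha_T^*$ and $\beta^*$ need not be mutually singular; they may charge the same reference measure with pointwise orthogonal yet simultaneously nonzero densities, in which case $\|\alpha_T^*+\beta^*\|$ is strictly smaller than $\|\alpha_T^*\|+\|\beta^*\|$. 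The verification by test functions also fails: $u_0+\varphi_n$ is only $2$-Lipschitz, so its pairing with $f$ bounds $2\W^1(f)$ rather than $\W^1(f)$, and the cross term $\langle f_N,u_0\rangle=\int\nabla u_0\,d\beta^*$ is not controlled.

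In fact the obstruction is not only in your proof. Take $N=2$, $S=[0,1]\times\{0\}\subset\Omega$, $\mu=\H^1\res S$ and $f=-\div\bigl(\tfrac{e_1+e_2}{\sqrt2}\,\mu\bigr)$. On $S$ one has $\T_\mu=\R e_1$ and $\N_\mu=\R e_2$, so the unique decomposition is $f_T=\tfrac1{\sqrt2}(\delta_{(1,0)}-\delta_{(0,0)})$ and $f_N=-\div(\tfrac{e_2}{\sqrt2}\,\mu)$. Then $\W^1(f)=1$ (the potential $u=(x+y)/\sqrt2$ saturates the bound $\|\nu\|=\H^1(S)=1$), while $\W^1(f_T)=\W^1(f_N)=1/\sqrt2$, so $\W^1(f_T)+\W^1(f_N)=\sqrt2>\W^1(f)$. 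Hence no optimal $\alpha$ can have disjointly supported tangential and normal parts here, and the additivity as literally stated fails; only the triangle inequality $\W^1(f)\le\W^1(f_T)+\W^1(f_N)$ survives in general. Your first two steps stand, but Step 3 cannot be completed as proposed.
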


\begin{proof}  By Theorem \ref{divergenze}, there exists  
a measure $\nu$ such that $-\div\ \nu=f$. By the definition of $\W^1(f,\lips)$ and recalling that elements of $\x0s$ can be represented as divergence of
tangential measures (see \ref{divtang}), we derive successively
\begin{eqnarray}
\W^1(f,\lips) &=& \inf_{g \in \x0s} \sup_{ u \in \C^1_0 \cap Lip_1}
\langle f-g , u\rangle=\\
&=&  \inf_{g \in \x0s} \sup_{ u \in \C^1_0 \cap Lip_1} \langle -\div
\nu_N -\div \nu_T -g , u\rangle=\\
&=&  \inf_{G \in \tang} \sup_{ u \in \C^1_0 \cap Lip_1} \langle -\div
\nu_N -\div  G , u\rangle=\\
&=&  \inf_{G \in \tang} \sup_{ u \in \C^1_0 \cap Lip_1} \langle 
\nu_N + G , \nabla u\rangle\leq \int|\nu_N|\ . 
\end{eqnarray}

On the other hand, by applying Lemma \ref{relaxandgo} to the measure 
$\nu_N + G $ of the last inequality, we obtain an equality. It follows
in particular that for all $\nu$ such that $-\div \ \nu=f$ 
\begin{equation} 
\W^1(f,\lips)= \int |\nu_N| .
\end{equation}
The decomposition $f= f_T+f_N$ of an element $f\in \xn$,
is obtained by considering any $\nu$ such
that $-\div \ \nu=f$ and $ \W^1 (f)= \int |\nu|$ (see Remark \ref{optimalnu})
and  then by setting: $f_T:= - \div \nu_T$ and $f_N= -\div \nu_N$.
The uniqueness of such decomposition is straightforward since 
the divergence of a normal measure cannot belong to $\x0s$ unless
it vanishes.
\end{proof}

A second formula is related to the measures
$\sigma\in\M^+(\Omega\times S^{N-1}\times[0,\infty))$ such that
$\pi_\sharp\sigma=f$ which are then admissible for problem
(\ref{dualmin}). Indeed we introduced the natural decomposition
$\sigma=\sigma_0+\sigma_+$ and by equations (\ref{nu0}) and
(\ref{nuerre}) we associated a measure $\nu_0$ to $\sigma_0$ and a
measure $\nu_+$ to $\sigma_+$. By construction $\nu_+$ is always a
tangential measure while $\nu_0$ is not necessarily so.

\begin{theorem} For every $f\in\lipp$ we have
  $$\W^1(f,\lips)=\inf \{\|\sigma_0\|\ :\
  \sigma\in\M^+(\Omega\times S^{N-1}\times[0,\infty))\
  \mbox{and}\ \pi_\sharp\sigma=f\}.$$
\end{theorem}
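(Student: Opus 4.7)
The plan is to prove the two inequalities separately, using Theorem~\ref{projector} and Theorem~\ref{divergenze} as the key inputs.

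For the inequality $\W^1(f,\lips)\le \inf\|\sigma_0\|$, I would fix any admissible $\sigma=\sigma_0+\sigma_+$ for problem (\ref{dualmin}) and associate to it the vector measures $\nu_0$ and $\nu_+$ through formulas (\ref{nu0})--(\ref{nuerre}). Theorem~\ref{divergenze} yields $-\div(\nu_0+\nu_+)=f$, and $\nu_+$ is by construction a tangential measure (its vector field $\frac{y-x}{|y-x|}$ lies along the supporting segment of the slicing), so Proposition~\ref{divtang} gives $-\div\nu_+\in\lips$. Hence $-\div\nu_+$ is a competitor for the distance from $f$ to $\lips$, and
$$\W^1(f,\lips)\le \W^1\bigl(f-(-\div\nu_+)\bigr)=\W^1(-\div\nu_0)\le \|\nu_0\|\le \|\sigma_0\|,$$
where Remark~\ref{optimalnu} gives the second inequality and the bound $|V|=1$ in (\ref{nu0}) gives the last. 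Taking the infimum over $\sigma$ concludes this direction.

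For the reverse inequality, I would apply Theorem~\ref{projector} to obtain $\nu\in\M(\Omega,\R^N)$ with $-\div\nu=f$ and $\int|\nu_N|=\W^1(f,\lips)$. Writing $\nu_N=v_N|\nu_N|$ with $|v_N|=1$, set
$$\sigma_0^*:=(x\mapsto(x,v_N(x),0))_\sharp|\nu_N|,$$
a positive measure on $\Omega\times S^{N-1}\times\{0\}$ of total mass $\W^1(f,\lips)$ satisfying $\int D_\varphi\,d\sigma_0^*=\langle f_N,\varphi\rangle$. The task then reduces to producing, for $f_T=-\div\nu_T\in\lips$, an admissible positive measure $\tau$ with arbitrarily small mass at $\{t=0\}$. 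Since $\lips$ is the completion of $\M_0(\Omega)$ in the Wasserstein norm, approximate $f_T$ by $f^k\in\M_0(\Omega)$ with $\W^1(f_T-f^k)\to 0$. For each $f^k$, let $\gamma^k$ be an optimal transport plan between the positive and negative parts and set $\tau^k:=p_\sharp(|x-y|\gamma^k)$: as shown in the first comparison proposition of this section, $\tau^k$ is admissible for $f^k$ with mass $\W^1(f^k)$, and it is supported in $\{t>0\}$ because the diagonal $\{x=y\}$ carries no mass under $|x-y|\,d\gamma^k$. Complete $\tau^k$ by an admissible measure $\tau^{\mathrm{err}}_k$ for $f_T-f^k$, produced by Theorem~\ref{exist&dual} with $\|\tau^{\mathrm{err}}_k\|=\W^1(f_T-f^k)\to 0$. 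Then $\sigma_k:=\sigma_0^*+\tau^k+\tau^{\mathrm{err}}_k$ is admissible for $f$ and
$$\|(\sigma_k)\res(\Omega\times S^{N-1}\times\{0\})\|\le \W^1(f,\lips)+\|\tau^{\mathrm{err}}_k\|\longrightarrow \W^1(f,\lips),$$
proving $\inf\|\sigma_0\|\le \W^1(f,\lips)$.

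The main obstacle is the construction of admissible measures for $f_T\in\lips$ with vanishing $t=0$ mass. The explicit formula $p_\sharp(|x-y|\gamma)$ works for $f^k\in\M_0(\Omega)$ because the weight $|x-y|$ annihilates the diagonal, but the passage from $\M_0(\Omega)$ to all of $\lips$ is not automatic and requires combining the density of $\M_0(\Omega)$ in the Wasserstein norm with the quantitative identity $\|\tau\|=\W^1(\cdot)$ from Theorem~\ref{exist&dual} to guarantee that the corrective term $\tau^{\mathrm{err}}_k$ has total mass tending to zero.
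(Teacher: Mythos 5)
Your proposal is correct. The first inequality ($\W^1(f,\lips)\le\inf\|\sigma_0\|$) is exactly the paper's chain of estimates: $\nu_+$ is tangential, hence $-\div\nu_+\in\lips$ is an admissible competitor, and $\W^1(f,-\div\nu_+)\le\|\nu_0\|\le\|\sigma_0\|$. For the reverse inequality you take a genuinely different route. The paper never invokes the decomposition $f=f_T+f_N$: it picks measures $f_n\in\M_0(\Omega)$ with $\W^1(f,f_n)\le\W^1(f,\lips)+\varepsilon_n$ (using the density of $\M_0(\Omega)$ in $\lips$), takes a $\xi^n$ of minimal mass admissible for $f-f_n$ in problem (\ref{dualmin}) --- so that $\|\xi^n_0\|\le\|\xi^n\|=\W^1(f-f_n)\le\W^1(f,\lips)+\varepsilon_n$ --- and adds $p_\sharp(|x-y|\gamma_n)$, which carries no mass on $\{t=0\}$; the whole near-optimal competitor comes out in one step. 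You instead lean on Theorem \ref{projector}: you isolate the normal part $\nu_N$ of a minimizer, push it forward explicitly onto $\Omega\times S^{N-1}\times\{0\}$ to get a $t=0$ layer of mass exactly $\W^1(f,\lips)$, and then treat $f_T\in\lips$ by approximation with measures plus a vanishing corrector from the existence theorem for (\ref{dualmin}). Your construction is slightly longer (it needs the decomposition, the explicit $\sigma_0^*$, and two error terms instead of one), but it buys a sharper picture: it exhibits \emph{where} the unavoidable $t=0$ mass comes from, namely precisely from the normal component $\nu_N$, and it produces a competitor whose $\{t=0\}$ part converges to an explicit measure of mass $\W^1(f,\lips)$ rather than merely being bounded by $\W^1(f,\lips)+\varepsilon_n$. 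Both arguments rest on the same two pillars --- density of $\M_0(\Omega)$ in $\lips$ for the Wasserstein norm, and the fact that $p_\sharp(|x-y|\gamma)$ charges only $\{t>0\}$ --- so the proposal is a valid, mildly more constructive variant of the paper's proof.
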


\begin{proof} Let $\sigma$ be such that $\pi_\sharp\sigma=f$ then as
 noticed before the measure $\nu_+$ associated to $\sigma$ by equation
  (\ref{nuerre}) is always tangential and then:
  \begin{equation}\label{estimate}
\W^1(f,\lips)\le\W^1(f,-\div\nu_+)\le\|\nu_0\|\le\|\sigma_0\|.
\end{equation}

  Let $f_n$ be a sequence of measures in $\M (\Omega)$ such that
  $f_n=f^+_n-f^-_n$ with $\|f_n^+\|=\|f_n^-\|<\infty$ and
  $$\W^1(f,f_n)\le\W^1 (f,\lips)+\varepsilon_n.$$
  Let
  $\xi^n\in\M(\Omega\times S^{N-1}\times[0,\infty))$ of
  minimal total variation among the positive measures such that
  $\pi_\sharp\sigma=f-f_n$. We decompose $\xi^n$ as
  $\xi^n_0+\xi^n_+$. Then $\|\xi^n\| = \|\xi^n_0\|+\|\xi^n_+\|$  and therefore  \begin{equation}\label{numero2}
      \|\xi^n_+\| \ \le\ \W^1(f,\lips)+\varepsilon_n-\|\xi^n_0\|\
      \le\ \W^1(f, \lips)+\varepsilon_n.  \end{equation}
  Let $\gamma_n$ be an optimal transport plan for $f_n$ and consider
  $\sigma^n:=\xi^n + p_\sharp(|x-y|\gamma_n)$ where $p$ is
  the map introduced in Subsection 3.3. By the linearity
  $\pi_\sharp\sigma^n=f$ that is $\sigma^n$ is admissible
  and by construction $\sigma^n_0=\xi^n_0$. Then (\ref{numero2})
 shows that $\sigma^n$ is optimal up to infinitesimal constant $\varepsilon_n$.
\end{proof}

\bigskip\bigskip

\textbf{Acknowledgments.} The research of the second and third
authors is part of the project {\it``Metodi variazionali nella teoria
  del trasporto ottimo di massa e nella teoria geometrica della
  misura''} of the program PRIN 2006 of the Italian Ministry of the
University.


\end{document}